\theoremstyle{plain}
\newtheorem{thm}{Theorem}[section]
\newtheorem{lem}[thm]{Lemma}
\newtheorem{prop}[thm]{Proposition}
\newtheorem{cor}[thm]{Corollary}
\theoremstyle{definition}
\newtheorem{defn}[thm]{Definition}
\theoremstyle{remark}
\newtheorem*{rem}{Remark}
\newcommand{\co}{\mathbb{C}}
\newcommand{\ze}{\mathbb{Z}}
\newcommand{\na}{\mathbb{N}}
\newcommand{\Hom}{\mathrm{Hom}}
\newcommand{\Aut}{\mathrm{Aut}}
\renewcommand{\a}{\alpha}
\renewcommand{\b}{\beta}
\renewcommand{\d}{\delta}
\newcommand{\D}{\Delta}
\renewcommand{\L}{\Lambda}
\newcommand{\s}{\sigma}
\renewcommand{\t}{\tau}
\renewcommand{\Tilde}{\widetilde}
\title{Finitely presented simple groups and measure equivalence}
\author{Antonio López Neumann}
\date{}
\begin{document}

\maketitle

\begin{abstract}
    We exhibit explicit infinite families of finitely presented, Kazhdan, simple groups that are pairwise not measure equivalent. These groups are lattices acting on products of buildings. We obtain the result by studying vanishing and non-vanishing of their $\ell^2$-Betti numbers.
    
    \vspace*{2mm} \noindent{2020 Mathematics Subject Classification: } 20E32, 20E42, 20F05, 20F55, 20J05, 20J06, 22E41, 28D15.

%20 Group theory and generalizations
%   20E32  	Simple groups
%   20E42  	Groups with a $BN$-pair; buildings
%   20F05  	Generators, relations, and presentations of groups
%   20F55  	Reflection and Coxeter groups
%	20J05 Homological methods in group theory
%	20J06 Cohomology of groups
%
%22 Topological groups, Lie groups
%	22E41 Continuous cohomology

%28 Measure and integration
%   28D15 General groups of measure-preserving transformations
%

\vspace*{2mm} \noindent{Keywords and Phrases: } Infinite simple groups, finite presentation, $L^2$-Betti numbers, measure equivalence, Coxeter groups, buildings, cohomological finiteness.

\end{abstract}

\section{Introduction}

Infinite finitely presented simple groups are rare in geometric group theory. To this date, few examples are known: Burger-Mozes groups acting on products of trees \cite{burger-mozes}, non-affine irreducible Kac-Moody lattices over a finite field acting on twin buildings \cite{caprace-remy-simple} and variants of Thompson groups \cite{rover}.

Naturally, we study these groups not up to isomorphism but up to equivalence relations that are relevant to the theory, like quasi-isometry or measure equivalence. Some results have been obtained on the quasi-isometry side. All Burger-Mozes groups are quasi-isometric since they act properly and cocompactly on products of trees, which are bi-Lipschitz equivalent. It is shown in \cite{caprace-remy-non-distortion} that there is an infinite family of non-affine irreducible Kac-Moody lattices that are pairwise not quasi-isometric. Using cohomological finiteness properties, the same result is shown in \cite{SWZ} for Röver-Nekrashevych variants of Thompson groups.

We obtain the same result as in \cite{caprace-remy-non-distortion} and in \cite{SWZ} but for measure equivalence. Namely:

\begin{thm}\label{infinite_family}
There are infinitely many measure equivalence classes containing finitely presented, Kazhdan, simple groups. These groups are Kac-Moody lattices over finite fields with well-chosen non-affine Weyl groups.
\end{thm}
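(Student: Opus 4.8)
\emph{Strategy.} The plan is to use Gaboriau's theorem that $L^2$-Betti numbers are invariants of measure equivalence up to a uniform positive scaling: if countable groups $\Gamma_1,\Gamma_2$ are measure equivalent, there is $c>0$ with $\beta^{(2)}_k(\Gamma_1)=c\,\beta^{(2)}_k(\Gamma_2)$ for all $k$. Hence, for a group with some non-vanishing $L^2$-Betti number all of which vanish above a finite degree, the largest degree carrying a non-zero $\beta^{(2)}_k$ is a measure-equivalence invariant. It therefore suffices to produce, for each $n\ge 1$, a finitely presented, Kazhdan, simple group $\Gamma_n$ with $\beta^{(2)}_k(\Gamma_n)=0$ for $k>2d_n$ and $\beta^{(2)}_{2d_n}(\Gamma_n)>0$, where $d_n\to\infty$; the $\Gamma_n$ then fall into pairwise distinct measure-equivalence classes, proving Theorem~\ref{infinite_family}.

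\emph{The construction and its soft properties.} For each $n$ I would choose a ``well chosen'' crystallographic Coxeter system $(W_n,S_n)$ --- with generalized Cartan matrix $A_n$ indecomposable of indefinite type and $m_{st}<\infty$ for all $s\ne t$ (so $A_n$ is $2$-spherical) --- such that the Davis complex $\Sigma_n$ of $W_n$ is a contractible $d_n$-manifold, with $d_n\to\infty$; by Davis's criterion this amounts to arranging the nerve of $W_n$ (the complex of spherical subsets) to be a generalized homology $(d_n-1)$-sphere with complete $1$-skeleton, and such Coxeter groups are produced by a combinatorial construction of the Coxeter diagram. Let $\Lambda_n$ be the minimal Kac--Moody group of type $A_n$ over $\mathbb{F}_{q_n}$, acting on its twin building $(X_+^{(n)},X_-^{(n)})$, and let $\Gamma_n$ be the simple group it yields (the subgroup generated by the root subgroups modulo its finite centre); all this for $q_n$ large, to be fixed at the end. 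For $q_n$ large enough, $\Gamma_n$ is infinite and simple by the Caprace--Rémy simplicity theorem for twin building lattices (applicable as $A_n$ is indecomposable of indefinite type); $\Lambda_n$, hence $\Gamma_n$, is finitely presented since $A_n$ is $2$-spherical (Abramenko--Mühlherr); and $\Gamma_n$ has property~(T) by the Dymara--Januszkiewicz spectral criterion for automorphism groups of buildings of large thickness. Passing from $\Lambda_n$ to $\Gamma_n$ scales all $L^2$-Betti numbers by a fixed positive rational number, which does not affect measure equivalence.

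\emph{The $L^2$-Betti computation and conclusion.} Since $\Lambda_n$ does not act cocompactly on a single building (its parabolics are infinite), I would compute its $L^2$-Betti numbers via the geometric completions: $\Lambda_n$ is a lattice in $G_+\times G_-$, where $G_\pm$ is the closure of the image of $\Lambda_n$ in $\mathrm{Aut}(X_\pm^{(n)})$. By the proportionality principle for lattices and the Künneth formula for $L^2$-Betti numbers of locally compact groups, $\beta^{(2)}_k(\Lambda_n)=\mathrm{covol}(\Lambda_n)\sum_{i+j=k}\beta^{(2)}_i(G_+)\,\beta^{(2)}_j(G_-)$. By the work of Dymara and of Davis--Dymara--Januszkiewicz--Okun, $\beta^{(2)}_i(G_\pm)$ is a positive multiple of the von Neumann dimension of the reduced weighted $L^2$-cohomology $\overline H^{\,i}_{q_n}(\Sigma_n)$ of the Davis complex with respect to the Hecke--von Neumann algebra of $W_n$ at parameter $q_n$; and in the ``large $q$'' regime, because $\Sigma_n$ is a contractible $d_n$-manifold, this is concentrated in degree $d_n$ and non-zero there. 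Hence $\beta^{(2)}_k(\Lambda_n)=0$ for $k\ne 2d_n$ while $\beta^{(2)}_{2d_n}(\Lambda_n)$ is a positive multiple of $\beta^{(2)}_{d_n}(G_+)\,\beta^{(2)}_{d_n}(G_-)>0$, and likewise for $\Gamma_n$ up to the harmless scaling. Taking $q_n$ large enough that all of the above hold, the $\Gamma_n$ have $L^2$-Betti numbers concentrated in the degrees $2d_n$ with $d_n\to\infty$, so by Gaboriau they represent infinitely many measure-equivalence classes of finitely presented Kazhdan simple groups.

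\emph{Main obstacle.} The crux is the $L^2$-Betti computation for the non-cocompact Kac--Moody lattice: one must descend to the product of geometric completions, justify the proportionality and Künneth steps for a possibly non-uniform lattice in a totally disconnected group, and feed in the weighted-$L^2$-cohomology computations of Dymara--Januszkiewicz and of Davis--Dymara--Januszkiewicz--Okun, in particular the vanishing outside, and non-vanishing in, the top degree for large parameter. Two further, more technical, points are the combinatorial construction of the crystallographic, $2$-spherical Coxeter groups $W_n$ with Davis complex a high-dimensional manifold, and the choice of a single $q_n$ large enough to secure simplicity, finite presentation, property~(T), and the ``large $q$'' cohomological regime at once.
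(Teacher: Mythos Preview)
Your high-level architecture (Gaboriau's projective invariance, proportionality for lattices, K\"unneth, the Dymara--Januszkiewicz formula) matches the paper's, but there is a genuine gap at the construction step. You assert the existence, for each $n$, of a crystallographic, $2$-spherical, non-affine irreducible Coxeter system whose Davis complex is a contractible $d_n$-manifold with $d_n\to\infty$, and dismiss this with ``such Coxeter groups are produced by a combinatorial construction of the Coxeter diagram.'' That sentence is exactly where the difficulty lies. The standard manifold examples---right-angled Coxeter groups attached to simple polytopes, as in \cite[8.9]{dymara}---have $\infty$ entries in the Coxeter matrix, so they are \emph{not} $2$-spherical, the associated Kac--Moody lattices are not known to be finitely presented, and the conjecture in \cite{abramenko} says they never are; this is precisely the obstruction the paper is written to circumvent (see the remark closing Section~2). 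Conversely, requiring the nerve to be a homology $(d-1)$-sphere with complete $1$-skeleton is extremely restrictive: if the nerve is $\partial\Delta^{|S|-1}$ then $(W,S)$ is $(|S|-1)$-spherical, and the paper's proposition in Section~5 shows that a $9$-spherical irreducible Coxeter group is finite or affine, so for $|S|\ge 10$ this forces $W$ affine. You provide no alternative construction, and none is standard.

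The paper sidesteps this by abandoning the manifold hypothesis and hence the demand for concentration in a single degree. It only needs \emph{one} non-vanishing $L^2$-Betti number in a degree tending to infinity, together with vanishing above a slightly higher degree. Concretely, one takes $S=J_{\mathrm{sph}}\sqcup J_{\mathrm{aff}}$ with $J_{\mathrm{sph}}=\{s_0\}$ and $W_{J_{\mathrm{aff}}}$ irreducible affine of rank $n+1$, all labels $m_{s_0 s_i}\le 6$; this is automatically crystallographic and $2$-spherical, so finite presentation and property~(T) are immediate. For the face $\sigma$ with $W_\sigma=\langle s_0\rangle$, Proposition~\ref{affine_non-vanishing} shows $D_\sigma$ has the cohomology of an $(n-1)$-sphere, whence $\beta^n(G)>0$ for large $q$ by Theorem~\ref{S=Sph+Aff}, while the dimension bound $\dim X_D\le |S|-1=n+1$ gives vanishing of $\beta^k(G)$ for $k\ge n+2$. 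After K\"unneth and proportionality, $\beta^{2n}(\Lambda_n)>0$ and $\beta^k(\Lambda_n)=0$ for $k\ge 2n+3$, which already separates the $\Lambda_{2n}$ by Gaboriau. The trade-off: you aimed for full concentration (cleaner invariant) at the cost of an unsupplied construction; the paper accepts a coarser vanishing window in exchange for an explicit, elementary family of Weyl groups.
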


Theorem \ref{infinite_family} is proven by studying the sequence of $\ell^2$-Betti numbers $(\b^k)_{k \in \na}$ of some of these lattices. Indeed, it was shown by Gaboriau that the sequence of $\ell^2$-Betti numbers of discrete countable groups is invariant by measure equivalence up to proportionality \cite[6.3]{gaboriau}. More precisely, we find for every $n \in \na$, a finitely presented, Kazhdan, simple (Kac-Moody) group $\L_n$ such that $\b^{d(n)}(\L_n) > 0$ and $\b^{k}(\L_n) = 0$ for $k > d(n)$, where $d(n) \to \infty$. Vanishing of $\b^k$ is a coarse equivalence invariant for any $k \in \na$ \cite{sauer-schrodl}, so this more precise statement proves the analogue of Theorem \ref{infinite_family} for coarse equivalence, in particular we recover the corresponding result of \cite{caprace-remy-non-distortion} concerning quasi-isometry classes. The result concerning coarse equivalence is not new, as it is implicit in \cite{caprace-remy-non-distortion}: they state that their result may alternatively be obtained using asymptotic dimension, which is a coarse equivalence invariant.
%(notice that, on the other hand, the sequence $(\b^k)_{k \in \na}$ up to proportionality is not a quasi-isometry invariant).

Petersen provides a first family of finitely generated simple (Kac-Moody) groups having non proportional sequences of $\ell^2$-Betti numbers \cite[6.8]{petersen} using an example by Dymara and Januszkiewicz \cite[8.9]{dymarajanuszkiewicz}. Unfortunately, the groups in question are not Kazhdan and it is not known if they are finitely presented. A conjecture \cite{abramenko-gates} says that such groups should never be finitely presented, due to the presence of $\infty$'s in the Coxeter diagram of the Weyl group.

Our proof follows the strategy outlined by Petersen but for families of Kac-Moody groups enjoying better properties, we describe the strategy in two steps. First, we use Dymara and Januszkiewicz's formula for $L^2$-Betti numbers of locally compact groups acting on some buildings (\cite[Section 8]{dymarajanuszkiewicz}, here Theorem \ref{formuleBetti}, see also \cite{grinbaum-reizis-oppenheim} for weaker bounds on the thickness). These groups include (products of) complete Kac-Moody groups, which can be seen as the ambient spaces of Kac-Moody lattices. Second, we use a result by Petersen \cite[5.9]{petersen} relating the sequence of $\ell^2$-Betti numbers of a lattice to the sequence of $L^2$-Betti numbers of the ambient topological group. This allows us to prescribe vanishing and non-vanishing of some $\ell^2$-Betti numbers of Kac-Moody lattices.

The formula for $L^2$-Betti numbers of complete Kac-Moody groups is explicit but still hard to manipulate. Roughly, the formula splits into two parts: a topological one and a representation theoretic one. As explained in \cite{dymarajanuszkiewicz}, the behaviour of the representation theoretic part is well understood. Thus, the difficulty comes from understanding the topological part. More precisely, one has to compute the cohomology of some subcomplexes of the Davis chamber which encode the combinatorial complexity of the groups we consider. The Davis chamber, as explained later, can be constructed entirely from the Weyl group of the building. This reduces our study of $L^2$-Betti numbers to a purely combinatorial study of Coxeter groups. Indeed, this reduces the proof of Theorem \ref{infinite_family} to finding a sequence of 2-spherical, non-affine Coxeter systems with unbounded virtual cohomological dimension. We compute the cohomology of some of these subcomplexes, precisely enough to give a non-vanishing criterion of an $L^2$-Betti number (in high degree) for groups acting on buildings having a certain Weyl group. The condition we require for the Weyl group is that the canonical generating set may be partitioned into two parts, such that one part generates a finite Coxeter group and the other generates an affine Coxeter group. This is enough to obtain Theorem \ref{infinite_family}.

We give explicit families of groups as in Theorem \ref{infinite_family} where we compute all of these cohomology spaces, thus simplifying the formula for $L^2$-Betti numbers of the corresponding complete Kac-Moody groups and Kac-Moody lattices. This formula can be stated explicitly by following the proof, we do not do it because what matters to us is to determine for which degrees the $\ell^2$-Betti numbers vanish.

Now we present the contents of the sections.

Section 2 introduces the necessary background for the rest of the paper. We first define the classes of simplicial complexes and groups discussed in \cite{dymarajanuszkiewicz}, as well as some combinatorial properties of the Davis chamber. We introduce Kac-Moody groups and list the results making them interesting for us. We then state the main theorems from \cite{dymarajanuszkiewicz}, especially the formula for $L^2$-Betti numbers of groups acting on buildings.

Section 3 deals with the cohomology of some of the subcomplexes $D_\s$ appearing in the formula of $L^2$-Betti numbers given in Section 2. Using nerves, we first give a vanishing criterion for this cohomology, slightly simplifying this formula. We then give a non-vanishing criterion under a condition on the Weyl group, and apply it to Kac-Moody lattices. This proves Theorem \ref{infinite_family}.

Section 4 gives concrete examples of Coxeter groups where results from Section 3 compute all $L^2$-Betti numbers of the corresponding complete and discrete Kac-Moody groups.

Section 5 addresses cohomological finiteness properties of the simple Kac-Moody lattices we study. Our arguments point out that it should not be possible to show the results of \cite{caprace-remy-non-distortion} using cohomological finiteness criteria as in \cite{SWZ}.

\tableofcontents

\paragraph{Acknowledgements} I would like to thank Bertrand Rémy and Marc Bourdon for many useful discussions, as well as Damien Gaboriau for pointing out to me that vanishing of an $\ell^2$-Betti number is a quasi-isometry invariant. I also thank the reviewer for multiple comments that improved the exposition of the proofs.

\section{$L^2$-Betti numbers of groups acting on buildings}

This section is mainly a review of some parts of \cite{dymarajanuszkiewicz}. First, we introduce the classes of simplicial complexes of \cite[Section 1]{dymarajanuszkiewicz}. Let $X$ be a \textit{purely $n$-dimensional} countable simplicial complex (every simplex is a face of an $n$-dimensional simplex). Top dimensional simplices in $X$ will be called \textit{alcoves}. Let $\Aut(X)$ be the group of simplicial automorphisms equipped with the compact-open topology and $G$ be a closed subgroup of $\Aut(X)$. We consider the following properties on the pair $(X,G)$.

$\mathcal{B}1$ $0$-dimensional links in $X$ are finite.

$\mathcal{B}2$ Links of dimension $\geq 1$ in $X$ are gallery-connected: for any two alcoves in such a link, there exists a path of alcoves connecting them, such that two consecutive elements meet on a face of codimension 1. 

$\mathcal{B}3$ All the links in $X$ are either finite or contractible (including $X$ itself).

The three properties listed above deal with the space $X$ only. The following condition is the only one that considers both the group $G$ and the space $X$. Roughly it requires the group to have the good size: it is big enough to act transitively on alcoves, and small enough to have a fundamental domain of maximal dimension.

$\mathcal{B}4$ The group $G$ acts transitively on the set of alcoves of $X$ and the quotient map $X \to X/G$ restricts on an isomorphism on each alcove.

The next two properties are spectral conditions on the Laplacian that are fundamental in \cite{dymarajanuszkiewicz} to prove the results presented in Section 2.3.

$\mathcal{B}_\d$ Links of dimension 1 are compact and the nonzero eigenvalues of the Laplacian on 1-dimensional links are $\geq 1 - \d$.

$\mathcal{B}_\d^*$ The spectrum of the Laplacian on 1-dimensional links in $X$ is a subset of $\{ 0 \} \cup [1-\d, +\infty[$.

We now begin a brief discussion on buildings, since they are the main source of examples of spaces satisfying (most of) these conditions. A standard reference for buildings is \cite{brown}. They are simplicial complexes obtained by gluing subcomplexes, called \textit{apartments}, under two incidence conditions: any two simplices lie in an apartment and their position is independent of the apartment. Each apartment is a copy of the same Coxeter complex, a purely dimensional simplicial complex with a simply transitive action (on its set of alcoves) of a Coxeter group called the \textit{Weyl group} of the building. The number of alcoves containing a given face of codimension 1 is called the \textit{thickness} (of that face) and for buildings we want it to be $\geq 3$ for all such faces. The buildings we are interested in have finite thickness, they satisfy $\mathcal{B}1$.

\subsection{Davis complex and Davis chamber}

Let $X$ be a simplicial complex satisfying $\mathcal{B}3$.

\begin{defn}
Let $X'$ be the first barycentric subdivision of $X$. The \textit{Davis complex} $X_D$ of $X$ is the subcomplex of $X'$ generated by the barycenters of simplices of $X$ with compact links. 
\end{defn}

This definition is interesting for two reasons. The first, is that $X_D$ is a deformation retract of $X$ \cite[1.4]{dymarajanuszkiewicz} which has the same automorphism group $\Aut(X)$, but the action of the latter becomes proper over $X_D$. The second reason stems from buildings, and is summarized in the following proposition.

\begin{prop}
Let $X$ be a building. Then the link of every simplex is a building. \\
Suppose $X$ is a non-compact building. Then $X_D$ can be endowed with a $CAT(0)$ metric. In particular, $X$ and $X_D$ are contractible.
\hfill $\square$
\end{prop}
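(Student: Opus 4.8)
\emph{Proof proposal.} I would treat the two assertions in turn; the first is classical, and the second reduces to a theorem of Davis once the complex $X_D$ is correctly identified. For the link statement, fix a simplex $\sigma$ of $X$: the link $\Lk(\sigma)$ is canonically isomorphic, as a simplicial complex, to the residue $\mathrm{Res}(\sigma)$ of all simplices of $X$ containing $\sigma$ (via $\tau \mapsto \tau \cup \sigma$ and its inverse), so it suffices to verify the building axioms for $\mathrm{Res}(\sigma)$. I would take as its apartments the subcomplexes $\mathrm{Res}_A(\sigma)$, for $A$ an apartment of $X$ through $\sigma$; each of these is the Coxeter complex of the standard parabolic subgroup $W_J \leq W$, where $J \subseteq S$ is the cotype of $\sigma$. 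The two incidence axioms for $\mathrm{Res}(\sigma)$ then follow by restricting the corresponding statements in $X$ and invoking the usual retractions; this is standard, so I would simply cite \cite{brown}.

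For the second assertion the plan is to identify $X_D$ with the Davis geometric realization of $X$ and then quote that the latter is $CAT(0)$. By the link statement together with finite thickness, a simplex of $X$ has compact link if and only if the parabolic $W_J$ attached to its cotype $J$ is finite, that is, $J$ is a spherical subset of $(W,S)$. Since $X$ is non-compact its apartments, being Coxeter complexes of $W$, are infinite, so $W$ is infinite and $S$ itself is not spherical; hence the cotype ($=S$) of the empty face is not spherical, so the empty face is not counted among the spherical simplices. Consequently $X_D$ --- the full subcomplex of $X'$ on the barycenters of compact-link simplices --- is the order complex of the poset $\mathrm{Sph}(X)$ of spherical simplices of $X$, which is exactly the Davis realization $\mathcal{U}(X,K)$ of $X$, with Davis chamber $K$ the geometric realization of the poset of spherical subsets of $(W,S)$.

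I would then invoke Davis's theorem that the Davis realization of a building carries a complete, piecewise-Euclidean $CAT(0)$ metric (the Moussong metric on each copy of $K$; see \cite{dymara} and the references therein); this equips $X_D$ with a $CAT(0)$ metric. A complete $CAT(0)$ space is uniquely geodesic and contracts to any of its points along geodesics, hence is contractible, so $X_D$ is contractible. Finally $X$ itself is contractible because $X_D$ is a deformation retract of $X$: this is \cite[1.4]{dymara}, which applies since $X$ satisfies $\mathcal{B}3$ --- every link of $X$ is a building by the first step, and a building of finite thickness is either finite (when its Weyl group is finite) or contractible (again by the $CAT(0)$ argument, now applied to a lower-dimensional building). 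The last point makes the whole thing an induction on $\dim X$, with the links occurring in $X$ supplying the lower-dimensional cases.

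The step I expect to be most delicate is the identification of $X_D$ with $\mathcal{U}(X,K)$: one has to match Dymara-Januszkiewicz's barycenters of compact-link simplices with the spherical-coset combinatorics of the Davis realization and, in particular, keep track of the empty face. It is precisely the non-compactness hypothesis that makes $S$ non-spherical and so removes the mismatch between the two models present for spherical (finite) buildings, where $X_D = X'$ is homotopy equivalent to a wedge of spheres rather than to $\mathcal{U}(X,K)$. The other genuinely external ingredient is Davis's $CAT(0)$ theorem, which I would use as a black box; the apparent circularity in deriving contractibility of $X$ from $\mathcal{B}3$ is resolved by the dimension induction indicated above.
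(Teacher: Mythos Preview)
Your proposal is correct and follows the same route as the paper, which does not give an argument at all: it simply records that the first assertion is \cite[IV.1 Prop 3]{brown} and that the second can be found in \cite{davis}. Your sketch unpacks exactly what those citations supply --- the residue description of links, the identification of $X_D$ with the Davis realization via the spherical-cotype criterion, and Davis's $CAT(0)$ theorem --- and your dimension induction resolving the apparent circularity with $\mathcal{B}3$ (needed for the deformation retract \cite[1.4]{dymara}) is a point the paper leaves implicit.
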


The first assertion is \cite[IV.1 Prop 3]{brown} and the second one can be found in \cite{daviscat0}.
In particular, this shows that buildings satisfy $\mathcal{B}2$ and $\mathcal{B}3$. 

Property $\mathcal{B}4$ is not always satisfied in this setting since one can consider buildings without automorphisms. However, if $G$ is a group with $BN$-pair and $X$ is the building constructed from it, then the pair $(X,G)$ satisfies $\mathcal{B}4$.

%We recall how $X$ is constructed from $G$. A \textit{BN-pair} in $G$ is the data of two subgroups $B$ and $N$ such that $B \cup N$ generates $G$, $T = B \cap N$ is normalized by $N$ and $W = N/T$ is a Coxeter group. Put $C(w) = BwB$ for $w \in W$. One requires that $C(s) C(w) \subseteq C(w) \cup C(sw)$ and $s^{-1}Bs \not\subseteq B$.

If a pair $(X, G)$ satisfies $\mathcal{B}3$ and $\mathcal{B}4$, the intersections $D = X_D \cap \D$ are simplicially isomorphic for any top dimensional simplex $\D$. We call such an intersection a \textit{Davis chamber} of $X$ and we denote it $D$. We can see $D$ as a cone over $D \cap \partial \D$ with apex the barycenter of $\D$, thus $D$ is contractible.

In the case of buildings, the Davis chamber may also be constructed from its Weyl group. The construction is equivalent to the previous one if the building comes from a BN-pair.

Let $(W,S)$ be a Coxeter system with $|S| = n+1$ and let $\D$ be the standard simplex of dimension $n$. We associate to each codimension 1 face of $\D$ a generator $s \in S$. This choice determines for each face $\s$ in $\D$ a parabolic subgroup $W_\s$ of $W$, where $W_\s = W_J = \langle J \rangle$ and $J \subseteq S$ is the set of generators corresponding to codimension 1 faces containing $\s$.

\begin{defn}
Let $\D'$ be the first barycentric subdivision of $\D$. We define the \textit{Davis chamber} of $W$ as the subcomplex $D = D_W$ of $\D'$ generated by the barycenters of faces $\s$ in $\D$ whose corresponding parabolic subgroup $W_\s$ is finite.
\end{defn}

The next definition measures in a certain sense how many simplices we must remove from $\D$ to obtain $D$. 

\begin{defn}
Let $(W, S)$ be a Coxeter system. We say $(W, S)$ is \textit{$k$-spherical} if for all $J \subseteq S$ with $|J| \leq k$, the parabolic subgroup $W_J = \langle J \rangle$ of $W$ is finite.
\end{defn}

This definition will appear often as a hypothesis on the Weyl group for the results we will obtain. We will discuss it in more detail in Section 5.

If $\s$ and $\t$ are faces of $\D$, then $\s \subseteq \t$ is equivalent to $W_\s \supseteq W_\t$. To describe all faces $\s$ of $\D$ whose corresponding subgroup $W_\s$ is finite, we have to identify those corresponding to maximal finite parabolic subgroups of $W$.

If $\s$ is a face of codimension $k$ of $\D$, then the subcomplex of $\D'$ generated by the barycenters of faces containing $\s$ is a (simplicial subdivision of a) cube of dimension $k$. Each maximal finite parabolic subgroup $W_J$ ($J \subseteq S$) corresponds to a cube of dimension $|J|$ in the Davis chamber.

The Davis chamber $D$ is then obtained as follows. We start with the disjoint union of these cubes, and then we glue some of their faces: if $W_I$ and $W_J$ are two maximal finite parabolic subgroups of $W$, the intersection of their corresponding cubes in $\D'$ is the cube corresponding to $W_{I\cap J}$.

This gives an equivalent construction of the Davis chamber that is independent of $\D$. From each parabolic subgroup $W_J$ ($J\subseteq S$) of $W$ we define the flag complex of parabolic subgroups $\{ W_{J'}, J' \subseteq J \}$ contained in $W_J$ ordered by inclusion. We construct $D$ as the union of all flag complexes of maximal finite parabolic subgroups of $W$, where we glue the complexes corresponding to $W_I$ and $W_J$ over the flag complex of $W_{I \cap J}$.

For $\s \subseteq \D$, let $\D_\s$ be the union of faces not containing $\s$ and $D_\s = D \cap \D_\s$. Notice $\D_\s$ is always a union of $(n-1)$-dimensional simplices of $\D$ and that $\D_\s = \partial \D$ if and only if $\s = \D$. More precisely, if $\s \subseteq \D$ corresponds to the parabolic subgroup $W_J$, then $\D_\s$ is exactly the union of codimension 1 faces of $\D$ corresponding to the generators $s_j$ for $j \in  S \setminus J$.

\subsection{Kac-Moody groups}

The family of simple groups we want to exhibit in Theorem \ref{infinite_family} are Kac-Moody groups. 
We introduce them following the presentation of \cite[Appendix TKM]{dymarajanuszkiewicz} and then list the properties that make them interesting to us.

\begin{defn}
A \textit{Kac-Moody datum} is the data $(I, \L, (\a_i)_{i \in I}, (h_i)_{i \in I}, A)$ of: \\
1. A finite set $I$. \\
2. A finitely generated abelian free group $\L$. \\
3. Elements $\a_i \in \L, i \in I$. \\
4. Elements $h_i \in \L^\vee = \Hom (\L, \ze), i \in I$. \\
5. A \textit{generalized Cartan matrix} $(A_{ij})_{i,j \in I}$ given by $A_{ij} = \langle \a_i, h_j \rangle$, satisfying
\begin{equation*}
    A_{ii} = 2, \textit{ if } i \neq j \textit{ then } A_{ij} \leq 0 \textit{ and } A_{ij} = 0 \textit{ if and only if } A_{ji} = 0.
\end{equation*}
\end{defn}

From a Kac-Moody datum (or merely from a generalized Cartan matrix) one can define a Coxeter matrix $M = (m_{ij})_{i,j \in I}$ as follows:
\begin{equation*}
    m_{ii} = 1 \textrm{ and for } i \neq j, m_{ij} =2,3,4,6 \textrm{ or } \infty \textrm{ as } A_{ij}A_{ji} = 0,1,2 ,3 \textrm{ or is } \geq 4, \textrm{ respectively.}
\end{equation*}
We consider the Coxeter group $W$ associated to this matrix:
 \begin{equation*}
     W = \langle r_i \; | \; (r_i r_j)^{m_{ij}}= 1 , \textrm{ for } m_{ij} \neq \infty \rangle.
 \end{equation*}

If a Kac-Moody datum is fixed, Tits defines a group functor associating to each field (or commutative ring in general) $k$ a group $\L(k)$ \cite{tits}. The group $\L(k)$ has two BN-pairs $(B_+, N)$ and $(B_-, N)$ such that their Weyl groups $B_{\pm} / (B_\pm \cap N)$ are isomorphic to the group $W$ coming from the generalized Cartan matrix. 

These BN-pairs define two buildings $X_+$ and $X_-$ of thickness $|k| + 1$ and Weyl group $W$ (therefore the dimensions of these buildings is $|I| -1$), such that $\L(k)$ acts transitively on their sets of chambers \cite[Appendix TKM]{dymarajanuszkiewicz}. These buildings are simplicially isomorphic, we denote them $X$ when it is not necessary to distinguish them. Denote by $G_\pm$ the completion of $\L(k)$ in $\Aut(X_\pm)$ with respect to the compact-open topology.

The following theorem summarizes the properties that justify the study of Kac-Moody groups for our purposes.

\begin{thm}\label{Kac-Moody}
Let $\L$ be a Kac-Moody group over $\mathbb{F}_q$ with Weyl group $W$. Then $\L$ is finitely generated. Moreover: \\
1. The covolume of $\L$ in $G_+ \times G_-$ (diagonally injected) is $W(\frac{1}{q})$, where
\begin{equation*}
    W(t) = \sum_{w \in W} t^{l(w)}.
\end{equation*}
In particular for $q > |I|$, the group $\L$ is a lattice in $G_+ \times G_-$. \\
2. If $W$ is non-affine, irreducible and $\L$ is a lattice in $G_+ \times G_-$, then $\L / Z(\L)$ is simple, where $Z(\L)$ is the center of $\L$. \\
3. If $q \geq 4$ and all the entries of the Coxeter matrix are finite (i.e. the Weyl group is $2$-spherical), then $\L$ is finitely presented.
\end{thm}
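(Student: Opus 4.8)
The statement gathers facts that are essentially due to Tits, Rémy, Carbone--Garland, Caprace--Rémy and Abramenko--Mühlherr, so my plan is not to reprove anything but to recall the mechanism behind each item and cite the appropriate source; the only genuine work is to match the hypotheses. \emph{Finite generation and Part 1.} First I would recall that, by Tits' construction \cite{tits}, $\L = \L(\mathbb{F}_q)$ is generated by the finite torus $\Hom(\L,\mathbb{F}_q^\times)$ together with the rank-one root subgroups $U_{\pm\a_i} \simeq (\mathbb{F}_q,+)$ for $i\in I$; since $I$ and $\mathbb{F}_q$ are finite, $\L$ is generated by finitely many finite subgroups and hence finitely generated. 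For the covolume I would use the twin structure: the Birkhoff decomposition gives that the diagonal $\L$-orbits on pairs of chambers of $X_+\times X_-$ are indexed by the codistance $w\in W$, each such pair having a compact-open stabilizer in $G_+\times G_-$ — which already yields discreteness of the diagonal $\L$ — and a bookkeeping of Haar volumes along the lines of Serre's covolume formula (with the normalization making a chamber stabilizer have volume $1$) produces $\mathrm{vol}(\L\backslash(G_+\times G_-)) = \sum_{w\in W} q^{-l(w)} = W(1/q)$, the index at codistance $w$ being $q^{l(w)}$. Finally, since $\#\{w\in W : l(w)=n\}\leq |I|\,(|I|-1)^{n-1}$, the series $W(1/q)$ converges whenever $q>|I|-1$, so $\L$ is a lattice for $q>|I|$. (This circle of ideas is due to Rémy and to Carbone--Garland; compare also \cite{caprace-remy}.)

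\emph{Part 2 (simplicity).} Here I would simply invoke the main theorem of \cite{caprace-remy-simple}. Its engine is the normal subgroup property for irreducible twin-building lattices of non-affine type — every subgroup normalized by $\L$ is either central or of finite index — combined with the fact that $\L/Z(\L)$ has no proper subgroup of finite index (which rests on $\L$ being generated by root subgroups), so that the only non-central normal subgroup of $\L$ is $\L$ itself and $\L/Z(\L)$ is abstractly simple. The hypothesis that $W$ be irreducible and non-affine is precisely what rules out the classical exceptions (notably affine rank two) to Tits-type simplicity criteria, and requiring $\L$ to be a lattice guarantees that $q$ is large enough for the field-sensitive steps of \cite{caprace-remy-simple}.

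\emph{Part 3 (finite presentation).} When $W$ is $2$-spherical, every standard parabolic $P_J$ with $|J|\leq 2$ is of spherical (finite) type, hence $P_J(\mathbb{F}_q)$ is a finite group. For $q\geq 4$ a Curtis--Tits/Abramenko--Mühlherr amalgamation theorem presents $\L$ as the amalgamated sum of the finitely many finite groups $P_J(\mathbb{F}_q)$, $|J|\leq 2$ (the fundamental group of the corresponding complex of groups over the subsets of $I$ of size $\leq 2$), the restriction $q\geq 4$ being exactly what makes the rank-two pieces rigid enough for the presentation to hold. A finite amalgam of finitely presented — here, finite — groups is finitely presented, so $\L$ is finitely presented.

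\emph{Expected obstacle.} As all three parts are quotations, the only real task is bookkeeping: reconciling ``$q>|I|$'' with ``$W(1/q)<\infty$'', ``$\L$ a lattice'' with the lower bounds on $q$ in \cite{caprace-remy-simple}, and ``$2$-spherical, $q\geq 4$'' with the hypotheses of the amalgamation theorem. If one wanted a self-contained proof, Part 2 would be the genuine obstacle, since it requires the full normal-subgroup-property machinery (the Moufang property, superrigidity-type input, Tits' simplicity lemma) of \cite{caprace-remy-simple}, which is far beyond what is needed elsewhere in this paper.
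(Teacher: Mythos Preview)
Your proposal is correct and follows the same approach as the paper, which simply cites the literature: Assertions 1 and 2 are Proposition~2 and Theorem~20 of \cite{caprace-remy}, and Assertion 3 is the main corollary of \cite{AM}. The only discrepancy is that for Part~2 you invoke \cite{caprace-remy-simple} (the announcement) while the paper points to \cite{caprace-remy} (the full version containing the normal subgroup property you correctly describe); your sketch of the mechanism is accurate either way.
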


\begin{proof}
Assertions 1 and 2 are respectively Proposition 2 and Theorem 20 in \cite{caprace-remy}. Assertion 3 is a simplified version of the main corollary in \cite{abramenko-muhlherr}.
\end{proof}

\begin{rem}
In what follows we will systematically consider center-free Kac-Moody groups. This can always be done without further assumptions on the generalized Cartan matrix by choosing the adjoint root datum, where $\L$ is generated by the $\a_i$'s.
\end{rem}

\subsection{Cohomology and $L^2$-Betti numbers of groups acting on buildings}

Dymara and Januszkiewicz state their results for classes $\mathcal{B}t$ and $\mathcal{B}+$ of pairs $(X,G)$ groups acting on simplicial complexes. The class $\mathcal{B}t$ are pairs $(X,G)$ satisfying $\mathcal{B}1 - 4$ and $\mathcal{B}_{\frac{13}{28^n}}^*$, the class $\mathcal{B}+$ are pairs $(X,G)$ satisfying $\mathcal{B}1 - 4$ and $\mathcal{B}_{\frac{13}{28^n}}$. For groups with a BN-pair and their associated buildings, the class $\mathcal{B}t$ corresponds to large minimal thickness and $\mathcal{B}+$ corresponds to large minimal thickness and finiteness of all entries of the Coxeter matrix \cite[1.7]{dymarajanuszkiewicz}, that is, 2-sphericity of its Weyl group. 

In particular, a complete Kac-Moody group over a finite field $\mathbb{F}_q$ is in $\mathcal{B}t$ for large $q$, and is in $\mathcal{B}+$ if all the entries of its Coxeter matrix are finite.

We now mention three important results of \cite{dymarajanuszkiewicz}. The initial motivation of \cite{dymarajanuszkiewicz} is to find examples of Kazhdan groups. The first theorem we state addresses this question and gives a criterion for the vanishing of the first and also higher cohomology groups for pairs in $\mathcal{B}+$ with a finiteness condition.

\begin{thm}\label{cohomG=0} \cite[5.2]{dymarajanuszkiewicz} Let $(X, G)$ be in $\mathcal{B}+$. Suppose the links of $X$ of dimensions $1, \ldots, k$ are compact. Then for any unitary representation $(\rho, V)$ of $G$, we have:
\begin{equation*}
    H^{i}_{ct}(G, \rho) = 0 \qquad \textrm{ for } i = 1, \ldots, k.
\end{equation*}
\end{thm}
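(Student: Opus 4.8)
The plan is to reduce the continuous cohomology vanishing to a statement about the building $X$ (or its Davis complex $X_D$) equipped with the $G$-action, and then to a combinatorial/spectral statement about links, where the spectral gap hypothesis $\mathcal{B}_\delta$ from the definition of $\mathcal{B}+$ does the work. Concretely, I would first replace $X$ by the Davis complex $X_D$: by the discussion in Section 2.1, $X_D$ is a $G$-equivariant deformation retract of $X$ on which $G$ acts \emph{properly}, and when $X$ is a (non-compact) building it is contractible and $CAT(0)$. Properness and contractibility mean that $X_D$ is a model for the classifying space, so that $H^*_{ct}(G,\rho)$ can be computed as the cohomology of the complex of (square-integrable, or appropriately controlled) $\rho$-valued $G$-equivariant cochains on $X_D$. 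This is the standard identification of continuous cohomology of a group acting properly and cocompactly on a contractible complex with an equivariant cochain complex on that complex.

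Next I would set up the relevant spectral sequence or Mayer–Vietoris-type argument coming from the local structure of $X$. The chambers of $X$ are glued along faces, and the link of a face of codimension $j$ records the local branching. One builds the $\rho$-valued cochain complex on $X$ as a complex assembled out of the chamber (the Davis chamber $D$) and the contributions of the links; the cohomology in degree $i$ is governed, via a Leray-type spectral sequence for the map $X \to X/G$ (a single chamber, by $\mathcal{B}4$), by the cohomology of the links in the lower degrees with coefficients twisted by $\rho$ restricted to the corresponding link-stabilizers. The point of the compactness hypothesis on the links of dimension $1,\dots,k$ is exactly that these link-stabilizers are \emph{compact} open subgroups of $G$ in those ranges, so that $\rho$ restricted to them decomposes discretely and the averaging/harmonic-projection techniques of \cite{dymara} apply: one can work with harmonic cochains.

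The heart of the argument — and the main obstacle — is the spectral estimate: one must show that a harmonic $\rho$-valued $i$-cochain ($1 \le i \le k$) is forced to vanish. This is where $\mathcal{B}_{\frac{13}{28^n}}$ enters. The idea is a Garland-type / Żuk-type argument: the Laplacian on $i$-cochains of $X$ is estimated from below in terms of the Laplacians on the $1$-dimensional links, and the hypothesis that the nonzero spectrum of those link-Laplacians lies above $1-\delta$ with $\delta = \frac{13}{28^n}$ very small (so the gap is close to $1$) forces the global Laplacian to be bounded below on the space of cochains orthogonal to the "constants". Since a harmonic cochain is in the kernel of the Laplacian, it must then lie in that "constant" part, and the compactness of the links in degrees $\le k$ together with the building geometry (gallery-connectedness $\mathcal{B}2$, contractibility of non-compact links $\mathcal{B}3$) rules out nonzero such classes in degrees $1,\dots,k$: a nonzero class in degree $i\le k$ would propagate, via gallery-connectedness across the building, to an invariant vector configuration incompatible with the building being non-compact (the apartments, being Coxeter complexes with infinite Weyl group, have no such cohomology). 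I expect the technical weight to be almost entirely in making the Garland estimate precise with the constant $\frac{13}{28^n}$ and in the bookkeeping of which links are compact — but both of these are carried out in \cite[Section 5]{dymara}, so for the purposes of this paper the proof is a citation of \cite[5.2]{dymara} with the observation that complete Kac-Moody groups over $\mathbb{F}_q$ for $q$ large are in $\mathcal{B}+$ and have the required compact links (the compact links being precisely those corresponding to spherical parabolic subgroups of the Weyl group).
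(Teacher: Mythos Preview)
This theorem is quoted from \cite[5.2]{dymara} and the paper does not supply its own proof; it is used as a black box. What the paper \emph{does} say about its proof, a few lines later, is different from your sketch: it explains that Theorem~\ref{cohomG=0} can be recovered from the decomposition formula $H^{*}_{ct}(G,\rho)=\bigoplus_{\s\subseteq\D}\Tilde H^{*-1}(D_\s;V^\s)$ of \cite[7.1]{dymara} by showing that, under the compact-links hypothesis, the nerve cohomology $H^*(\mathcal U)\cong H^*(D_\s)$ vanishes in degrees $0,\dots,k-1$ for every $\s$. In other words, the paper's preferred route is \emph{first} the hard analytic work producing the decomposition formula, \emph{then} a purely combinatorial vanishing of the pieces $\Tilde H^{*-1}(D_\s)$.

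Your proposal instead outlines the direct Garland/\.Zuk argument: identify $H^*_{ct}(G,\rho)$ with harmonic $\rho$-valued cochains on $X_D$, then use the spectral gap on $1$-dimensional links to bound the global Laplacian from below and kill harmonic cochains in degrees $1,\dots,k$. That is indeed the shape of the argument in \cite[Section~5]{dymara}, and your summary is broadly faithful to it. The difference is one of emphasis: the paper highlights the decomposition-formula proof because the spaces $D_\s$ are exactly the objects it goes on to analyse in Sections~3 and~4, whereas your sketch stays on the analytic side. Both routes are in \cite{dymara}; neither is wrong. One small overreach in your write-up: the last paragraph suggests that gallery-connectedness and non-compactness of apartments are what ``rule out'' nonzero classes, but in the actual argument the vanishing is a direct consequence of the Laplacian estimate (the constant $\frac{13}{28^n}$ is calibrated precisely so that the induction on dimension closes), not of a separate propagation argument across apartments.
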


For buildings of finite thickness, compactness of all links of dimension $\leq k$ is equivalent to having a $(k+1)$-spherical Weyl group. This combinatorial condition is necessary for cohomological vanishing: one can consider the group $D_\infty$ acting simplicially on a tree $X$ with edge set $E$, the induced action on $L^2(E)$ does not have a fixed point. Thus we have a space $X$ not verifying the condition of the theorem (the only link of dimension 1 is $X=Lk(\emptyset)$) and $H^{1}_{ct}(G, L^2(E))  \neq 0$.

The second theorem we mention is a formula for cohomology spaces of groups in $\mathcal{B}+$ with values in unitary representations.

\begin{thm}\cite[7.1]{dymarajanuszkiewicz}
Suppose either that \\
the pair $(X, G)$ is in $\mathcal{B}+$ and $(\rho, V)$ is a unitary representation of $G$, or that, \\ 
the pair $(X, G)$ is in $\mathcal{B}t$ and $(\rho, V)$ is a subrepresentation of $\bigoplus^\infty L^2(G)$. \\ 
Then
\begin{equation*}
     H^{*}_{ct}(G, \rho) = \bigoplus_{\s \subseteq \D} \Tilde{H}^{*-1}(D_\s; V^{\s}).
\end{equation*}
\end{thm}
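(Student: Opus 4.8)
The plan is to realize $H^*_{ct}(G,\rho)$ on the Davis complex $X_D$ and then to run the $L^2$, $\rho$-twisted analogue of Davis's computation of the cohomology of a Coxeter group. By $\mathcal{B}3$ and the preceding discussion $X_D$ is contractible and carries a proper cocompact $G$-action; combining the cellular cochain complex of $X_D$ with a homogeneous continuous-cochain resolution of $\rho$ gives a double complex computing $H^*_{ct}(G,\rho)$, and since the cells of $X_D$ have compact open stabilizers (which have no higher continuous cohomology with unitary coefficients) one spectral sequence collapses: $H^*_{ct}(G,\rho)$ is the cohomology of the complex of $G$-equivariant cellular cochains on $X_D$, where a cell lying over a face $\s$ of $\D$ contributes the fixed subspace $V^\s$ of its stabilizer. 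By $\mathcal{B}4$ this equivariant complex is carried by the Davis chamber $D = X_D \cap \D$. In the $\mathcal{B}t$ case one must instead use $L^2$-type equivariant cochains on $X_D$, which is the structural reason why $\rho$ has to be built from $L^2(G)$ there.

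The second step is combinatorial. Because $X$ is a building — built from chambers glued along panels, with all residues again buildings — the equivariant cochain complex on $X_D$ admits a Davis-type decomposition as a direct sum indexed by the faces $\s$ of $\D$, equivalently by the subsets $J \subseteq S$: the $\s$-summand carries coefficients $V^\s$ and computes the relative cohomology $H^*(D, D_\s; V^\s)$, where $D_\s = D \cap \D_\s$ is the union of the mirrors of $D$ attached to the generators in $S \setminus J$. Since $D$ is contractible, being a cone over $D \cap \partial\D$, the long exact sequence of the pair $(D, D_\s)$ gives $H^*(D, D_\s; V^\s) \cong \Tilde{H}^{*-1}(D_\s; V^\s)$; summing over $\s$ produces the claimed identity, provided the decomposition descends from cochains to cohomology.

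That descent is the analytic core, and the only place the spectral hypotheses are used. The point is that the Davis decomposition is clean on cohomology only if, for every link $L$ of $X$ of dimension $\geq 1$, the reduced cohomology $\Tilde{H}^*(L; V|_L)$ reduces to its invariant part $\Tilde{H}^*(L) \otimes V^{G_L}$ — i.e.\ the only harmonic cochains on the links are the invariant ones, with no contribution from almost-invariant vectors. This is exactly what a spectral gap secures: $\mathcal{B}_{13/28^n}$ bounds below by $1 - \frac{13}{28^n}$ the nonzero spectrum of the Laplacian on $1$-dimensional links with arbitrary unitary coefficients, and this gap propagates to all higher-dimensional links by a Garland-type local-to-global estimate, yielding the required vanishing. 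Under the weaker hypothesis $\mathcal{B}_{13/28^n}^*$ the $1$-dimensional links need not be compact and only their nonzero spectrum is controlled, so the propagation and vanishing go through only for representations occurring as subrepresentations of $\bigoplus^\infty L^2(G)$, where the extra von Neumann-algebraic structure (finiteness of the relevant traces, continuity of the spectral measures) compensates for the missing compactness — hence the two cases in the statement.

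I expect the main obstacle to be precisely this last step. The formal ingredients — contractibility and properness of $X_D$ from $\mathcal{B}3$, the collapse giving the equivariant-cochain model for $H^*_{ct}$, and the Davis-type combinatorial splitting of that complex from $\mathcal{B}4$ and the building axioms — are routine. What genuinely requires work is the quantitative harmonic analysis: propagating the link spectral gaps of $\mathcal{B}_\delta$ (resp.\ $\mathcal{B}_\delta^*$) through all the links of $X$, and checking that, with these gaps in hand, the algebraic direct-sum decomposition of the cochain complex survives to a decomposition of reduced continuous cohomology.
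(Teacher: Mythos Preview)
The paper does not give a proof of this theorem: it is quoted verbatim from Dymara--Januszkiewicz \cite[7.1]{dymara} and used as a black box, with only a one-line remark afterwards that \cite{dymara} deduces Theorem~\ref{cohomG=0} from it via the nerve of a simple covering of $D_\s$. There is therefore no ``paper's own proof'' to compare your proposal against.

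That said, your outline is broadly faithful to the strategy in \cite{dymara}. The two structural ingredients you name --- identifying $H^*_{ct}(G,\rho)$ with the cohomology of a $G$-equivariant cochain complex on $X_D$ (using contractibility, properness and $\mathcal{B}4$), and then running a Davis-type splitting of that complex over the faces of $\D$ --- are exactly what happens there. Your identification of the analytic core is also right: the spectral hypotheses $\mathcal{B}_\d$/$\mathcal{B}_\d^*$ enter only to guarantee that the algebraic decomposition of cochains descends to cohomology, via Garland-type local estimates propagated through links. One caution: in \cite{dymara} the decomposition is not literally obtained as a direct sum of pair complexes $(D,D_\s)$ followed by a long exact sequence, but rather through a filtration of the cochain complex whose graded pieces are identified; and the distinction between the $\mathcal{B}+$ and $\mathcal{B}t$ cases is handled somewhat differently from your heuristic (the restriction to subrepresentations of $\bigoplus^\infty L^2(G)$ enters because the relevant harmonic projections are then bounded in the $G$-trace, not through a von Neumann finiteness argument per se). But as a plan of attack your sketch is sound; it is just not something the present paper attempts.
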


We draw the attention on the right hand side depending on the classical cohomology groups of the spaces $D_\s$ defined at the end of Section 2.1. Theorem \ref{cohomG=0} is proven in \cite[Section 7]{dymarajanuszkiewicz} from this formula by noticing that the cohomology $H^*(D_\s)$ is the cohomology $H^*(\mathcal{U})$ of a simple covering $\mathcal{U}$ of $D_\s$. The combinatorial condition of Theorem \ref{cohomG=0} on the Weyl group implies vanishing of $H^*(\mathcal{U})$ in low degrees for all $\s \subseteq \D$. The proofs given in Section 3 are in the same spirit: under combinatorial conditions on the Weyl group, we compute $H^*(D_\s)$ for some $\s \subseteq \D$. The difference is that instead of considering all simplices $\s \subseteq \D$ and obtaining a partial description of the cohomology of $D_\s$, we pick particular $\s \subseteq \D$ for which we can fully describe the spaces $H^*(D_\s)$.

The last theorem we mention is the starting point for this paper, though the expression we present is obtained directly from the previous theorem. It is a formula for $L^2$-Betti numbers, as defined in \cite[4.1]{sauer}, of groups acting on buildings of finite thickness in $\mathcal{B}t$. As said before, groups in this class include complete Kac-Moody groups over finite fields of large cardinality.

\begin{thm}\label{formuleBetti} \cite[8.5]{dymarajanuszkiewicz}
Let $(X, G)$ be a building in $\mathcal{B}t$ of thickness $q +1$. Then the $L^2$-Betti numbers of $G$ are given by
\begin{equation*}
    \b^k(G) = \sum_{\s \subseteq \D} \dim_\co \Tilde{H}^{k-1}(D_\s; \co) \cdot \dim_G L^2(G)^{\s}.
\end{equation*}
Moreover, the sum can be taken for $\s$ with compact links.
\end{thm}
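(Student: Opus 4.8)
The plan is to deduce the formula from the previous theorem by passing to von Neumann dimensions. Recall that, following \cite{sauer}, the $k$-th $L^2$-Betti number of $G$ is $\b^k(G) = \dim_G \overline{H}^k_{ct}(G; L^2(G))$, the von Neumann dimension over the group von Neumann algebra $\mathcal{L}(G)$ of the reduced continuous cohomology of $G$ with coefficients in the left regular representation. First I would apply the previous theorem: since $L^2(G)$ is (a subrepresentation of) $\bigoplus^\infty L^2(G)$ and $(X,G)$ lies in $\mathcal{B}t$, that theorem provides an isomorphism of topological $\mathcal{L}(G)$-modules
\[
H^k_{ct}(G; L^2(G)) \cong \bigoplus_{\s \subseteq \D} \Tilde{H}^{k-1}\big(D_\s; L^2(G)^{\s}\big),
\]
where $L^2(G)^\s$ denotes the vectors fixed by the stabilizer $G_\s$ of $\s$. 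Since $\D$ has only finitely many faces and each $D_\s$ is a finite simplicial complex, the right-hand side is a finite direct sum of subquotients of Hilbert spaces by closed-range operators; hence $H^k_{ct}(G;L^2(G))$ is automatically Hausdorff and coincides with $\overline{H}^k_{ct}(G;L^2(G))$, and by additivity of $\dim_G$ the computation reduces to evaluating $\dim_G \Tilde{H}^{k-1}(D_\s; L^2(G)^\s)$ for each $\s$.

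Next I would unravel each summand. Because $D_\s$ is a finite simplicial complex, $\Tilde{H}^{k-1}(D_\s;-)$ is the cohomology of a finite cochain complex of finitely generated free $\co$-modules, so the universal coefficient theorem over the field $\co$ gives a natural isomorphism $\Tilde{H}^{k-1}(D_\s;\mathcal{H}) \cong \Tilde{H}^{k-1}(D_\s;\co) \otimes_\co \mathcal{H}$ for every coefficient space $\mathcal{H}$. Taking $\mathcal{H} = L^2(G)^\s$ with its $G$-action --- on which $G$ acts only through the second tensor factor --- this becomes an isomorphism of Hilbert $G$-modules, and since tensoring a Hilbert $G$-module with a finite-dimensional $\co$-vector space merely replicates it, $\dim_G \Tilde{H}^{k-1}(D_\s; L^2(G)^\s) = \dim_\co \Tilde{H}^{k-1}(D_\s;\co)\cdot \dim_G L^2(G)^\s$. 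Summing over $\s \subseteq \D$ yields the displayed formula.

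For the last assertion I would note that if the link of $\s$ in $X$ is not compact, then $G_\s$ is a non-compact closed subgroup of $G$, and the restriction to $G_\s$ of the left regular representation of $G$ decomposes into copies of the regular representation of $G_\s$, which has no nonzero invariant vectors; hence $L^2(G)^\s = 0$ and $\dim_G L^2(G)^\s = 0$. Such simplices contribute nothing, so the sum may be taken over $\s$ with compact links only.

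The step I expect to be the main obstacle is the functional-analytic bookkeeping: one must check that the isomorphism furnished by the previous theorem respects the topological $\mathcal{L}(G)$-module structures, that the cohomology in play is genuinely reduced (so that $\overline{H}^k_{ct} = H^k_{ct}$ and $\dim_G$ distributes over the direct sum), and that the universal-coefficient isomorphism above is equivariant --- all of which is implicit in the analysis of \cite[Section 8]{dymara}. Once this is in place, the remaining ingredients (universal coefficients over a field and multiplicativity of $\dim_G$ under tensoring with a finite-dimensional space) are routine.
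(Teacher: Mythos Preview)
Your proposal is correct and follows exactly the route the paper indicates: the paper does not give a self-contained proof of this theorem but cites it from \cite[8.5]{dymara}, noting only that ``the expression we present is obtained directly from the previous theorem.'' Your derivation --- apply the previous theorem to $\rho = L^2(G)$, use universal coefficients over $\co$ to factor out $\Tilde{H}^{k-1}(D_\s;\co)$, then take $\dim_G$ --- is precisely that deduction.

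One small refinement for the last assertion: in \cite{dymara} the space $V^\s$ is not literally the $G_\s$-invariants but a certain orthogonal complement inside them, so one has a proper inclusion $L^2(G)^\s \subseteq L^2(G)^{G_\s}$ rather than equality. The paper records this explicitly in the proof of the proposition following Theorem~\ref{formuleBetti}: ``the argument is that $L^2(G)^\s \subseteq L^2(G)^{G_\s}$ and if the link of $\s$ is non-compact, then $L^2(G)^{G_\s} = \{0\}$.'' Your reasoning that $L^2(G)^{G_\s}=0$ for non-compact $G_\s$ is correct, and the inclusion then finishes the job; just be careful not to conflate $L^2(G)^\s$ with $L^2(G)^{G_\s}$.
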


The fundamental observation by Dymara and Januszkiewicz is that for $q > 2^n$, $\dim_G L^2(G)^{\s} > 0$ for all $\s \subseteq \D$ with compact links \cite[p. 612]{dymarajanuszkiewicz}. Thus, for $q$ large enough, the problem of determining whether $\b^k(G)$ is zero or not reduces to determining if there exists $\s \subseteq \D$ such that $ \Tilde{H}^{k-1}(D_\s; \co) \neq 0$ or not. In the end, this reduces to the study of the combinatorics of the Weyl group. In the next two sections we study the topology of $D_\s$ via the combinatorics of the Weyl group of the building. 

\begin{rem}
1. The techniques of \cite{dymarajanuszkiewicz} are reformulated in \cite{kassabov-subspace} in terms of angles between subspaces. This interpretation allowed Grinbaum-Reizis and Oppenheim \cite{grinbaum-reizis-oppenheim} to recover the previous results from \cite{dymarajanuszkiewicz} under weaker thickness bounds. For (thick) affine buildings, the thickness bound disappears and thus they recover a classical theorem by Casselman \cite{casselman} using geometric methods. This does not improve the main statement of this article, but allows to slightly extend the range of our examples.

2. Dymara and Januszkiewicz apply their results to Kac-Moody groups whose Weyl group $W_P$ is the right-angled Coxeter group defined by the intersection relations of the faces of codimension 1 of a polytope $P$ of dimension $n$ \cite[8.9]{dymarajanuszkiewicz}. Most of the associated Kac-Moody lattices are non-affine and irreducible, hence simple. When $P$ is dual to a triangulation of a sphere (up to considering a barycentric subdivision), it is shown that all $L^2$-Betti numbers of the completions of such groups vanish except in degree $n$. As stated by \cite{petersen}, this gives a first example of an infinite family of finitely generated simple groups with non-proportional sequences of $\ell^2$-Betti numbers. Petersen says such groups should often be finitely presented. Unfortunately, the Coxeter matrices of their Weyl groups have $\infty$ entries, so it is not known whether these groups are finitely presented or not, and a conjecture \cite{abramenko-gates} says that such groups should never be finitely presented. The examples we will give here are finitely presented in view of Theorem \ref{Kac-Moody}, Assertion 3.
\end{rem}

\section{Cohomology of subcomplexes of the Davis chamber}

In this section we elaborate on the formula of Theorem \ref{formuleBetti} for $L^2$-Betti numbers of groups acting on buildings. We study the contribution of the topological part of this formula to obtain first a vanishing criterion, slightly simplifying the formula, and then a non-vanishing criterion.
We then apply this non-vanishing criterion to Kac-Moody lattices and prove Theorem \ref{infinite_family}.

\subsection{Cohomology of nerves: vanishing and non-vanishing}

In what follows a contractible space is non-empty. Let $X$ be a finite simplicial complex covered by a finite family of subcomplexes $\mathcal{U} = \{ X_i , i \in I\}$. The \textit{nerve} $N(\mathcal{U})$ of the cover $\mathcal{U}$ is the simplicial complex whose $k$-simplices are the subsets $J \subseteq I$ with $|J| = k+1$ such that the intersection $\bigcap_{j \in J} X_j$ is non-empty. When $J \subseteq J'$ we have an inclusion of the corresponding simplices. The following result, kwown as the Nerve Lemma, allows us to compute cohomology using nerves.

\begin{prop}\label{Nerve-lemma}\cite[Corollary 4G.3]{hatcher}
Let $X$ be a finite simplicial complex covered by a finite family of subcomplexes $\mathcal{U} = \{ X_i , i \in I\}$  such that for all nonempty $J \subseteq I$, the intersection $\bigcap_{j \in J} X_j$ is either empty or contractible. Then $X$ is homotopically equivalent to the nerve $N(\mathcal{U})$.
\end{prop}

Vanishing results in this article will be obtained using the following particular case of the Nerve lemma.

\begin{lem}\label{contractible-union}
Let $X$ be a finite simplicial complex covered by $\{ X_i , i \in I\}$ a finite family of subcomplexes such that $\bigcap_{j \in J} X_j$ is contractible for all nonempty $J \subseteq I$. Then $X = \bigcup_{i \in I} X_i$ is contractible.
\end{lem}

\begin{proof}
By Proposition \ref{Nerve-lemma}, the space $X$ is homotopically equivalent to the nerve of the cover $\{ X_i , i \in I\}$, which is just a simplex.
\end{proof}

We recall our setting. Let $(W,S)$ be a Coxeter system and $\D$ be a simplex of dimension $|S| - 1$. Let $D$ be the Davis chamber of $(W,S)$. We defined for $\s \subseteq \D$, the subset $\D_\s$ to be the union of the faces of $\D$ not containing $\s$ and $D_\s = D \cap \D_\s$. Notice that $\D_\s$ is always a union of codimension 1 faces of $\D$. More precisely, if $\D_s$ is the codimension 1 face of $\D$ corresponding to the generator $s \in S$ and $\s = \bigcap_{s \in J}\D_s$, then $\D_\s = \bigcup _{s \in J^c}\D_s$ and $D_\s = \bigcup _{s \in J^c}D_s$. We may apply the Nerve lemma to the union $\bigcup _{s \in J^c}D_s$ in view of the following remark.

\begin{rem}\label{D_s-is-a-cone}
If $\s$ is the simplex in $\D$ corresponding to the parabolic subgroup $W_J$, then $D \cap \s$ is the geometric realization of the flag complex of finite parabolic subgroups containing $W_J$. Hence, if $D \cap \s$ is non-empty, then $D \cap \s$ is a cone with apex the barycenter of $\s$ and thus $D \cap \s$ is contractible. Notice that $D \cap \s$ is non-empty if and only if $W_J$ is finite.
\end{rem}

\begin{comment}

\begin{proof}
Start with the case $|I| = 2$. Write $X = A \cup B$ where $A, B$ and $A \cap B$ are contractible finite simplicial complexes. By \cite[0.16]{hatcher}, given a continuous map $f$ defined on $X$ and a homotopy $f_t$ of $f|_Y$ on a subcomplex $Y$, we can extend $f_t$ to a homotopy of $f$ defined over $X$. This property implies, since $Y = A\cap B$ is contractible \cite[0.17]{hatcher}, that the quotient map $X \to X / (A \cap B)$ is a homotopy equivalence. This quotient space is just the wedge sum of $A / (A \cap B)$ and $B / (A \cap B)$. Both of these spaces are contractible because, by the same argument, they are homotopically equivalent to $A$ and $B$ respectively. Thus $X$ is contractible.

The general case is done by induction on $|I|$. Let $I = \{0, \ldots, n \}$ and write $X = \bigcup_{i=0}^n X_i =  A \cup B$, where $A = X_0 $ and $B = \bigcup_{i=1}^n X_i $. By induction hypothesis, $B$ is contractible. The same is true for $A \cap B = \bigcup_{i=1}^n (X_0 \cap X_i)$, since the spaces $X_0 \cap X_i$ are contractible with contractible intersections. Thus $X$ is contractible by the case $|I| = 2$.
\end{proof}

\end{comment}

The previous lemma takes the following form in our setting.

\begin{cor}\label{contractible-union-of-D_s} For $\s \subseteq \D$, we write $\s = \bigcap_{s \in J}\D_s$ for some $J \subseteq S$, so that $D_\s = \bigcup_{s \in J^c} D_s$. If $W_{J^c} = \langle J^c \rangle$ is finite (or equivalently, if $ \bigcap_{s \in J^c} D_s$ is non-empty), then $D_\s$ is contractible.

\end{cor}

\begin{proof}
If $ \bigcap_{s \in J^c} D_s$ is non-empty, then every sub-intersection $\bigcap_{s \in J'} D_s$ is non-empty for $J' \subseteq J^c$, hence contractible by the previous remark. Now the result follows from Lemma \ref{contractible-union}.
\end{proof}

We wish to compute spaces $\Tilde{H}^{*}(D_\s)$ for $\s \subseteq \D$ that appear in the formula of Theorem \ref{formuleBetti}. The sum ranges over $\s$ with compact links, that is, over $\s$ with finite corresponding parabolic subgroups $W_\s$. What we said allows us to focus on a smaller class of simplices $\s$. Thus, the sum in the formula of Theorem \ref{formuleBetti} reduces to the following.

\begin{prop}
Let $(X, G)$ be a building in $\mathcal{B}t$ of thickness $q +1$. Then the $L^2$-Betti numbers of $G$ are given by
\begin{equation*}
    \b^k(G) = \sum_{\s \subseteq \D} \dim_\co \Tilde{H}^{k-1}(D_\s; \co) \cdot \dim_G L^2(G)^{\s}.
\end{equation*}
Moreover, the sum can be taken over simplices $\s$ corresponding to finite parabolic subgroups $W_J$ such that $W_{J^c}$ is infinite.
\end{prop}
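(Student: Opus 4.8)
The plan is to start from Theorem \ref{formuleBetti}, where the sum already ranges over simplices $\s$ with compact links, i.e. those $\s = \bigcap_{s \in J} \D_s$ with $W_J$ finite. The remaining task is to discard the terms for which $W_{J^c}$ is also finite. So first I would fix such a $\s$, with both $W_J$ and $W_{J^c}$ finite, and apply Proposition \ref{contractible_union}: since $W_{J^c}$ is finite, $D_\s = \bigcup_{s \in J^c} D_s$ is contractible, hence $\Tilde{H}^{k-1}(D_\s; \co) = 0$ for all $k$. Consequently the entire summand $\dim_\co \Tilde{H}^{k-1}(D_\s; \co) \cdot \dim_G L^2(G)^{\s}$ vanishes, regardless of the value of $\dim_G L^2(G)^{\s}$.

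Second, I would note that there is one edge case to handle cleanly: the simplex $\s = \D$ itself, corresponding to $J = S$ and $J^c = \emptyset$, for which $W_{J^c} = W_\emptyset = \{1\}$ is finite. Here $D_\D = \bigcup_{s \in \emptyset} D_s = \emptyset$, so $\Tilde{H}^{k-1}(\emptyset; \co)$ is the reduced cohomology of the empty set, which vanishes in every degree by the convention that reduced cohomology of the empty simplicial complex is zero (this is consistent with the appearance of $\D_\s = \partial\D$ only when $\s \neq \D$, and with $D$ itself being contractible). Either way, the summand for $\s = \D$ contributes nothing, so it may be dropped together with the other $W_{J^c}$-finite terms. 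What remains is precisely the sum over $\s$ corresponding to finite $W_J$ with $W_{J^c}$ infinite, which is the claimed formula.

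I do not expect a serious obstacle here: the proposition is essentially a bookkeeping corollary of Proposition \ref{contractible_union} applied inside the already-established formula of Theorem \ref{formuleBetti}. The only point requiring a line of care is making sure the equivalence "$W_{J^c}$ finite $\iff \bigcap_{s \in J^c} D_s \neq \emptyset$" is invoked correctly, since that is exactly the hypothesis of Proposition \ref{contractible_union}; this follows from Remark \ref{contractible_Xi}, where $D \cap \s'$ is non-empty iff its corresponding parabolic subgroup is finite. So the proof is just: apply the formula, then kill the extra terms using contractibility.

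\begin{proof}
By Theorem \ref{formuleBetti} the formula holds with the sum taken over $\s \subseteq \D$ with compact links, i.e. over $\s = \bigcap_{s \in J} \D_s$ with $W_J$ finite. It remains to show that every term with $W_{J^c}$ finite contributes zero. If $W_{J^c} = \langle J^c \rangle$ is finite, then by Proposition \ref{contractible_union} the space $D_\s = \bigcup_{s \in J^c} D_s$ is contractible (when $J^c = \emptyset$ this space is empty, and reduced cohomology of the empty complex also vanishes in all degrees). Hence $\Tilde{H}^{k-1}(D_\s; \co) = 0$ for all $k$, so the corresponding summand $\dim_\co \Tilde{H}^{k-1}(D_\s; \co) \cdot \dim_G L^2(G)^{\s}$ is zero. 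Removing these vanishing terms leaves exactly the sum over $\s$ corresponding to finite parabolic subgroups $W_J$ with $W_{J^c}$ infinite.
\end{proof}
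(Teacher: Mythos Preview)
Your proof is correct and follows exactly the paper's approach: start from Theorem \ref{formuleBetti} (sum over $\s$ with finite $W_J$) and apply Proposition \ref{contractible_union} to discard the terms with finite $W_{J^c}$.

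One small correction to your edge-case discussion: for $\s = \D$ one has $J = \emptyset$, not $J = S$, since $J$ is the set of generators whose codimension-$1$ faces \emph{contain} $\s$; hence $J^c = S$ and $D_\D = D \cap \partial\D$, consistent with the paper's remark that $\D_\s = \partial\D$ iff $\s = \D$. The case $J^c = \emptyset$ therefore never occurs (faces of $\D$ correspond to proper subsets $J \subsetneq S$), so your parenthetical about the empty complex is unnecessary---which is just as well, since under the usual convention $\Tilde{H}^{-1}(\emptyset;\co) \neq 0$. This does not affect the validity of your argument.
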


\begin{proof}
The proof of Theorem \ref{formuleBetti} in \cite[8.5]{dymarajanuszkiewicz} already shows we can restrict ourselves to those $\s$ whose corresponding parabolic subgroup $W_J$ is finite (the argument is that $L^2(G)^\s \subseteq L^2(G)^{G_\s}$ and if the link of $\s$ is non-compact, then $ L^2(G)^{G_\s} = \{0 \}$). Now suppose $\s \subseteq \D$ corresponds to a finite parabolic subgroup $W_J$ and that the parabolic subgroup $W_{J^c}$ is also finite. Then by Corollary \ref{contractible-union-of-D_s}, the space $D_\s$ is contractible, so its cohomology does not contribute to the sum in any degree.
\end{proof}

We now turn to non-vanishing phenomena. In order to show Theorem \ref{infinite_family}, we will use the following non-vanishing criterion obtained using Proposition \ref{Nerve-lemma}.

\begin{cor}\label{affine_non-vanishing}
For $\s \subseteq \D$, we write $\s = \bigcap_{s \in J}\D_s$ for some $J \subseteq S$, so that $D_\s = \bigcup_{s \in J^c} D_s$.
If $W_{J^c} = \langle J^c \rangle$ is infinite, but every proper parabolic subgroup of $W_{J^c}$ is finite (or equivalently, if $ \bigcap_{s \in J'} D_s \neq \emptyset$ for all nonempty $J' \subsetneq J^c$ but $ \bigcap_{s \in J^c} D_s = \emptyset$), then $D_\s$ is homotopy equivalent to a sphere of dimension $(|I|-2)$.
\end{cor}

\begin{proof}
The nerve of the cover $\{ X_i , i \in I\}$ is the boundary of a simplex of dimension $(|I|-1)$, which homotopically is a sphere of dimension $(|I|-2)$. Proposition \ref{Nerve-lemma} gives the result.
\end{proof}

\begin{comment}

\begin{cor}\label{sphere_union}
Let $X$ be a finite simplicial complex covered by $\{ X_i , i \in I\}$ a finite family of subcomplexes such that $\bigcap_{j \in J} X_j$ is contractible for all proper nonempty subsets $J \subsetneq I$. Suppose $\bigcap_{i \in I} X_i$ is empty. Then $X = \bigcup_{i \in I} X_i$ is homotopy equivalent to a sphere of dimension $(|I|-2)$.
\end{cor}

Now we go back to the situation where $(W, S)$ is an irreducible Coxeter system with Davis chamber $D$. We can restate the previous result for $X = D_\s$ as follows.

\end{comment}

\begin{rem}
Infinite irreducible Coxeter groups such that every proper parabolic subgroup is finite are classified: they are exactly affine and compact hyperbolic Coxeter groups \cite[p.133, exercice 14]{bourbaki}. Compact hyperbolic Coxeter groups have rank $\leq 5$ \cite[p.133, exercice 15.c]{bourbaki}, so in higher rank the only examples are affine Coxeter groups \cite[p.100, Proposition 10]{bourbaki}.
\end{rem}

The following result summarizes the main idea, that is, we have non-vanishing of an $L^2$-Betti number in high degree for groups acting on buildings whose Weyl group is obtained as a perturbation of an affine Coxeter group by a finite Coxeter group.

\begin{thm}\label{S=Sph+Aff}
Let $(X, G)$ be a building in $\mathcal{B}t$ of thickness $q +1$ and irreducible Weyl group $(W, S)$. Suppose $S$ admits a partition $S = J_\mathrm{sph} \sqcup J_\mathrm{aff}$ such that $ W_\mathrm{sph} = \langle J_\mathrm{sph} \rangle$ is finite and $ W_\mathrm{aff} = \langle J_\mathrm{aff} \rangle$ is an infinite affine Coxeter group. Put $|J_\mathrm{aff}| = n+1$. Then, for $q$ large enough,
\begin{equation*}
    \b^n(G) > 0.
\end{equation*}
\end{thm}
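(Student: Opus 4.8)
The plan is to apply Theorem~\ref{formuleBetti} and to exhibit a single simplex $\s \subseteq \D$ contributing to $\b^n(G)$. First I would use the formula
\begin{equation*}
    \b^n(G) = \sum_{\s \subseteq \D} \dim_\co \Tilde{H}^{n-1}(D_\s; \co) \cdot \dim_G L^2(G)^{\s},
\end{equation*}
and recall the Dymara--Januszkiewicz observation that for $q$ large enough (say $q > 2^{|S|}$) one has $\dim_G L^2(G)^\s > 0$ for every $\s \subseteq \D$ with compact link. Hence it suffices to produce one $\s \subseteq \D$ with compact link such that $\Tilde{H}^{n-1}(D_\s; \co) \neq 0$; positivity of both factors then forces $\b^n(G) > 0$ since all terms in the sum are non-negative.

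Next I would choose the simplex. Take $\s \subseteq \D$ to be the face corresponding to the parabolic subgroup $W_{J_\mathrm{sph}}$; equivalently, in the notation of Proposition~\ref{affine_non-vanishing}, write $\s = \bigcap_{s \in J_\mathrm{aff}} \D_s$ so that $D_\s = \bigcup_{s \in J_\mathrm{aff}} D_s$. Since $W_{J_\mathrm{sph}}$ is finite by hypothesis, $\s$ has compact link, so this simplex genuinely appears in the sum. Now I want to apply Proposition~\ref{affine_non-vanishing} with $J = J_\mathrm{aff}$: I need that $W_{J_\mathrm{aff}}$ is infinite but every \emph{proper} parabolic subgroup of $W_{J_\mathrm{aff}}$ is finite. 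Infiniteness is given. For the proper parabolics, I would invoke the classical fact (used in the Remark after Proposition~\ref{affine_non-vanishing}, cf.\ \cite[p.~100, Proposition~10]{bourbaki}) that an irreducible affine Coxeter group has the property that all of its proper parabolic subgroups are finite spherical Coxeter groups. Here one must be slightly careful: the hypothesis says $W_\mathrm{aff}$ is an infinite affine Coxeter group; if $W_\mathrm{aff}$ is irreducible affine this is exactly the cited statement, and if one allows a priori reducibility, an infinite ``affine'' Coxeter group that is a product would have an infinite proper parabolic factor, so the intended reading is that $W_\mathrm{aff}$ is irreducible affine (this is the standard meaning), and I would state it that way. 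Granting this, Proposition~\ref{affine_non-vanishing} yields that $D_\s = D_{J_\mathrm{aff}}$ has the cohomology of a sphere of dimension $|J_\mathrm{aff}| - 2 = (n+1) - 2 = n-1$. In particular $\Tilde{H}^{n-1}(D_\s; \co) \cong \co \neq 0$.

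Finally I would assemble: with this choice of $\s$, the summand indexed by $\s$ equals $1 \cdot \dim_G L^2(G)^\s > 0$, every other summand is $\geq 0$, so $\b^n(G) > 0$. One should also confirm that the pair $(X,G)$ meets the hypotheses of Theorem~\ref{formuleBetti}, i.e.\ $(X,G) \in \mathcal{B}t$ of thickness $q+1$ --- but this is assumed in the statement, so nothing further is needed there; the only quantitative ``$q$ large enough'' requirement is the one coming from $\dim_G L^2(G)^\s > 0$, and it can be taken to be $q > 2^{|S|}$.

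I do not expect a serious obstacle: the argument is essentially a bookkeeping reduction plus one application of Proposition~\ref{affine_non-vanishing}. The only delicate point is the appeal to the structure theory of affine Coxeter groups to guarantee that all proper parabolics of $W_\mathrm{aff}$ are finite --- in particular making sure ``affine'' is understood in the irreducible sense, since the conclusion of Proposition~\ref{affine_non-vanishing} fails if $W_J$ has an infinite proper parabolic. It is also worth remarking that the same computation shows $\b^k(G) = 0$ for $k > \dim D + 1$ and gives some control in intermediate degrees, but for the present statement only the degree-$n$ non-vanishing is needed.
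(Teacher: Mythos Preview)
Your proposal is correct and follows essentially the same argument as the paper: choose $\s$ corresponding to $W_{J_\mathrm{sph}}$, apply Proposition~\ref{affine_non-vanishing} with $J = J_\mathrm{aff}$ to get $\Tilde{H}^{n-1}(D_\s;\co)\neq 0$, and combine with $\dim_G L^2(G)^\s > 0$ for $q > 2^{|S|}$ in Theorem~\ref{formuleBetti}. Your extra care about the irreducibility of $W_\mathrm{aff}$ (needed for all proper parabolics to be finite) is a point the paper leaves implicit but is indeed the intended reading.
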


\begin{proof}
Let $\s$ be the simplex corresponding to the subgroup $W_\mathrm{sph}$. Hence, $D_{\s} = \bigcup_{s \in J_\mathrm{aff}} D_s$ and thus by \ref{affine_non-vanishing}, the space $\Tilde{H}^*(D_{\s})$ is non-zero in degree $n-1$. Recall that for $q > 2^{|S|}$, we have $\dim_G L^2(G)^{\s} > 0$ \cite[p. 612]{dymarajanuszkiewicz}. Therefore by Theorem \ref{formuleBetti} we have
\begin{equation*}
    \b^n(G) \geq \dim \Tilde{H}^{n-1}(D_{\s}) \dim_G L^2(G)^{\s} > 0.
\end{equation*}
\end{proof}

\subsection{Application to measure equivalence of Kac-Moody lattices}

We now apply the previous result to $\ell^2$-Betti numbers of Kac-Moody groups. Recall from Section 2.2 that we can suppose our Kac-Moody groups to be center-free without adding conditions on the generalized Cartan matrix.

\begin{cor}\label{KM_lattice_Betti}
Let $\L$ be a center-free Kac-Moody group over a finite field $\mathbb{F}_q$ with irreducible Weyl group $(W,S)$ as in Theorem \ref{S=Sph+Aff}, we set $S = J_\mathrm{sph} \sqcup J_\mathrm{aff}$ with $J_\mathrm{sph} \neq \emptyset$. Suppose also that its Coxeter matrix has finite entries. Put $|J_\mathrm{aff}| = n+1$. For $q$ large enough,
\begin{equation*}
    \b^{k}(\L) \; \bigg \{ \; \begin{matrix}
    = 0  & $\textrm{if }$ & k \geq 2|S| - 1 \\
    > 0 &  $\textrm{if }$ & k = 2 n
    \end{matrix}
\end{equation*}
Moreover, $\L$ is infinite finitely presented, Kazhdan and simple.
\end{cor}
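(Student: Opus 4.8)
The plan is to combine the two $L^2$-Betti number statements with the structural facts about Kac-Moody lattices collected in Theorem \ref{Kac-Moody}, and then invoke Petersen's comparison between $L^2$-Betti numbers of a locally compact group and those of its lattices together with Gaboriau's measure equivalence invariance. Concretely, I would proceed as follows.

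First I would establish the displayed (non-)vanishing. For the lower range $k = 2n$: the completions $G_+$ and $G_-$ each lie in $\mathcal{B}t$ for $q$ large (the Coxeter matrix has finite entries, so in fact they lie in $\mathcal{B}+$), and by Theorem \ref{S=Sph+Aff} we get $\b^n(G_\pm) > 0$ for $q$ large. Since $\L$ is a lattice in $G_+ \times G_-$ (Theorem \ref{Kac-Moody}(1), valid for $q > |I|$), Petersen's result \cite[5.9]{petersen} relating the $L^2$-Betti numbers of a group to those of its lattices, applied together with the Künneth formula for $L^2$-Betti numbers of a product $\b^k(G_+ \times G_-) = \sum_{i+j=k} \b^i(G_+)\b^j(G_-)$, gives that $\b^{2n}(\L)$ is a positive multiple of $\b^n(G_+)\b^n(G_-) > 0$ (one checks the lower-degree cross terms do not interfere, or simply that the summand $i = j = n$ already forces positivity since all $L^2$-Betti numbers are nonnegative). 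For the vanishing range $k \geq 2|S| - 1$: the spaces $D_\s$ are subcomplexes of the $(|S|-1)$-dimensional complex $D$, so $\Tilde H^{m}(D_\s;\co) = 0$ for $m \geq |S| - 1$; hence by Theorem \ref{formuleBetti}, $\b^k(G_\pm) = 0$ for $k \geq |S|$, so $G_\pm$ has $L^2$-cohomological dimension $\leq |S| - 1$, the product $G_+ \times G_-$ has it $\leq 2|S| - 2$, and therefore $\b^k(\L) = 0$ for $k \geq 2|S| - 1$ by the same lattice comparison.

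Next I would verify the four properties of $\L$ itself. Finite generation is part of Theorem \ref{Kac-Moody}. Finite presentation follows from Theorem \ref{Kac-Moody}(3), using the hypothesis that the Coxeter matrix has finite entries (so $W$ is $2$-spherical) and taking $q \geq 4$. The Kazhdan property: by Theorem \ref{cohomG=0} applied to $G_+$ and $G_-$ — which lie in $\mathcal{B}+$ and whose links of dimension $1$ are compact because $W$ is $2$-spherical (indeed $3$-spherical is not needed, just $i=1$) — we get $H^1_{ct}(G_\pm, \rho) = 0$ for every unitary representation, i.e. $G_\pm$ has property (T); hence so does $G_+ \times G_-$, and property (T) passes to lattices, so $\L$ is Kazhdan. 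Simplicity (of $\L = \L/Z(\L)$, as $\L$ is taken center-free): this is Theorem \ref{Kac-Moody}(2), which requires $W$ non-affine, irreducible, and $\L$ a lattice — irreducibility is assumed, $\L$ is a lattice for $q$ large, and $W$ is non-affine because it contains $W_\mathrm{aff}$ as a \emph{proper} parabolic subgroup (here $J_\mathrm{sph} \neq \emptyset$ is used) together with a spherical piece, so $W$ itself cannot be affine. Finally $\L$ is infinite since it is a lattice in a non-compact group (the buildings are non-compact as $W$ is infinite), or because an infinite simple group statement would otherwise be vacuous.

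The main obstacle I anticipate is the bookkeeping in the first step: making sure Petersen's comparison \cite[5.9]{petersen} applies to the completions $G_\pm$ (these are locally compact, second countable, unimodular — unimodularity because they contain a lattice — with the required regularity), correctly handling the proportionality constant (the covolume $W(1/q)$), and invoking the $L^2$-Künneth formula for the non-discrete factors in the right generality so that positivity of $\b^n(G_+)\b^n(G_-)$ genuinely transfers to $\b^{2n}(G_+\times G_-)$ and then to $\L$. Everything else is a citation-level assembly of Theorems \ref{Kac-Moody}, \ref{cohomG=0}, \ref{formuleBetti} and Theorem \ref{S=Sph+Aff}.
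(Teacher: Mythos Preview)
Your argument is essentially the paper's own: apply Theorem \ref{S=Sph+Aff} to the completions, bound the $L^2$-cohomological dimension of $G_\pm$, pass to the product via K\"unneth, and then transfer to $\L$ via Petersen's proportionality \cite[5.9]{petersen}; the structural properties are read off Theorem \ref{Kac-Moody} and Theorem \ref{cohomG=0} exactly as you do (and your check that $W$ is non-affine because $J_\mathrm{sph}\neq\emptyset$ is the right reason). One small slip: saying ``$D_\s$ is a subcomplex of the $(|S|-1)$-dimensional complex $D$'' only yields $\Tilde H^{m}(D_\s)=0$ for $m\geq |S|$, which is one degree short; the correct observation is that $D_\s = D\cap \D_\s \subseteq \partial\D$ has dimension $\leq |S|-2$, giving $\Tilde H^{m}(D_\s)=0$ for $m\geq |S|-1$ and hence $\b^k(G_\pm)=0$ for $k\geq |S|$ (the paper gets this instead from the proper cocompact action of $G$ on the $(|S|-1)$-dimensional Davis complex $X_D$ via \cite[3.4]{dymara}).
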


\begin{proof}
Let $G$ be the complete Kac-Moody group associated to $\L$. Theorem \ref{S=Sph+Aff} applies to $G$, so $\b^n(G) > 0$ for large $q$. Let $X$ be the building coming from the BN-pair of $G$. The group $G$ acts properly cocompactly on the Davis complex $X_D$. This complex has dimension $\leq |S|-1$, thus \cite[3.4]{dymarajanuszkiewicz} gives $H^k_{ct}(G, \rho) = 0$ for $k \geq |S|$ any quasi-complete representation $(\rho , V)$. Hence the Künneth formula for $L^2$-Betti numbers \cite[6.5]{petersen} gives for large $q$
\begin{equation*}
    \b^k(G \times G) \; \bigg \{ \; \begin{matrix}
    = 0  & $\textrm{if }$ & k \geq 2|S| - 1 \\
    > 0 &  $\textrm{if }$ & k = 2 n
    \end{matrix}
\end{equation*}
For $q > n+1$, the group $\L$ is a lattice in $G \times G$ by Theorem \ref{Kac-Moody} Assertion 1. By \cite[5.9]{petersen}, the sequences of $L^2$-Betti numbers of $\L$ and $G \times G$ are proportional. 

Since the entries of the Coxeter matrix of $(W,S)$ are finite, Theorem \ref{cohomG=0} tells us $G$ has property $(T)$ (for the same bound on $q$), thus so does $G \times G$ and any lattice in $G \times G$. The group $\L$ is finitely presented and simple in view of Theorem \ref{Kac-Moody} assertions 2, 3 and because we assumed $Z(\L) = \{ 0 \}$. 
\end{proof}

The previous corollary gives a control on the vanishing of $\ell^2$-Betti numbers for simple Kac-Moody lattices. We can now prove the theorem stated in the introduction using Gaboriau's projective invariance of $\ell^2$-Betti numbers by measure equivalence \cite[6.3]{gaboriau}.

\begin{proof}[Proof of Theorem \ref{infinite_family}] It suffices to take an affine diagram with $n+1$ generators $s_1, \ldots, s_{n+1}$, add a generator $s_0$, declare that it does not commute with at least one $s_i, i \geq 1$ and that the products $s_0 s_i$ have order $\leq 6$. Thus the Coxeter system $(W,S)$, where $S_\mathrm{sph} = \{ s_0 \}$, $S_\mathrm{aff} = \{s_1, \ldots, s_{n+1}\}$, $S = S_\mathrm{sph} \sqcup S_\mathrm{aff}$ and $W = \langle S \rangle$, satisfies the conditions of the previous corollary with $|S| = n+2$.

Let $\L_n$ be a Kac-Moody group over $\mathbb{F}_q$ with Weyl group $(W,S)$ with $q$ as in the corollary (its Coxeter matrix comes from a generalized Cartan matrix because of the hypothesis on the order of the products $s_0s_i$). Then
\begin{equation*}
    \b^k(\L_n) \; \bigg \{ \; \begin{matrix}
    = 0  & $\textrm{if }$ & k \geq 2n + 3, \\
    > 0 &  $\textrm{if }$ & k = 2 n.
    \end{matrix}
\end{equation*}
Hence the groups $(\L_{2n})$ have non-proportional sequences of $\ell^2$-Betti numbers, hence they are pairwise non measure equivalent in view of \cite[6.3]{gaboriau}.
\end{proof}

\section{An explicit family of Coxeter diagrams}

In this section we deal with a concrete family of Coxeter diagrams having a decomposition as in Theorem \ref{S=Sph+Aff}. The first aim is to exhibit a concrete family as before. However, in this example we can say more: the previous arguments compute all $L^2$-Betti numbers of a pair $(X,G)$ in $\mathcal{B}t$ with Weyl groups corresponding to these particular diagrams.
Let $(W,S)$ be the Coxeter system defined by the diagram $\widetilde{A}_{n,2}$ (with $n+1$ generators and $n \geq 3$) as below:

\begin{center}
  \begin{tikzpicture}[scale=.4]
    \draw (-1,0) node[anchor=east]  {$\widetilde{A}_{n,2}$};
    \foreach \x in {0,...,7}
    \draw[xshift=\x cm,thick, fill=black] (\x cm,0) circle (.25cm);
    \draw[xshift = 3.5 cm,thick, fill=black] (3.5 cm,1.6) circle (.25cm);
    \draw[xshift = 3.5 cm,thick, fill=black] (3.5 cm,- 4) circle (.25cm);
    
    \foreach \y in {0.15, 2.15,3.15,4.15, 6.15}
    \draw[xshift=\y cm,thick] (\y cm,0) -- +(1.4 cm,0);
    \foreach \y in {1.15, 5.15}
    \draw[xshift=\y cm,dotted, thick] (\y cm,0) -- +(1.4 cm,0);
    \draw[xshift=6 cm,thick] (30: 0 mm) -- (60: 16 mm);
    \draw[xshift=8 cm,thick] (30: 0 mm) -- (120: 16 mm);
    \draw[xshift=0 cm,thick] (30: 0 mm) -- (-30: 80 mm);
    \draw[xshift=14 cm,thick] (30: 0 mm) -- (-150: 80 mm);

  \end{tikzpicture}
\end{center}

\subsection{Maximal finite parabolic subgroups}

Let $s_1, \ldots, s_n$ be the generators corresponding to the affine subgroup of type $\widetilde{A}_{n-1}$ in $W$ and $s_0$ be the remaining generator, so that  $\langle s_0, s_1, s_2 \rangle$ is an infinite affine Coxeter group of type $\widetilde{A}_{2}$. To obtain finite parabolic subgroups one has to consider subsets $J \subset S$ that do not contain these three generators simultaneously.

For simplicity, call $W_i = \langle S \setminus \{ s_i \} \rangle$ and for $i \neq j$ call $W_{i,j} = \langle S \setminus \{ s_i , s_j \} \rangle$. The subgroups $W_1$ and $W_2$ are of type $A_{n-1}$, thus finite. Therefore, they are maximal finite parabolic subgroups of $W$. The subgroup $W_0$ is of type $\Tilde{A}_{n-1}$, thus infinite. It is affine, so every proper parabolic subgroup of $W_0$ is finite. Therefore the parabolic subgroups $W_{0, i}$ for $1 \leq i \leq n$ are maximal finite.

\begin{rem}
We can proceed in the same way for Kac-Moody groups with Weyl groups coming from the same alteration of an affine Coxeter diagram.

\begin{center}
  \begin{tikzpicture}[scale=.4]
    \draw (-1,0) node[anchor=east]{} ;
    \foreach \x in {0,...,9}
    \draw[xshift=\x cm,thick, fill=black] (\x cm,0) circle (.25cm);
    \draw[xshift = 4.5 cm,thick, fill=black] (4.5 cm,1.6) circle (.25cm);
    
    \foreach \y in {1.15, 3.15,4.15,5.15, 7.15}
    \draw[xshift=\y cm,thick] (\y cm,0) -- +(1.4 cm,0) ;
    \draw[xshift=0.15 cm,thick] (0.15 cm,0) -- node[above]{4} +(1.4 cm,0) ;
    \draw[xshift=8.15 cm,thick] (8.15 cm,0) -- node[above]{4} +(1.4 cm,0) ;
    \foreach \y in {2.15, 6.15}
    \draw[xshift=\y cm,dotted, thick] (\y cm,0) -- +(1.4 cm,0) ;
    \draw[xshift=8 cm,thick] (30: 0 mm) -- (60: 16 mm);
    \draw[xshift=10 cm,thick] (30: 0 mm) -- (120: 16 mm);
  \end{tikzpicture}
\end{center}

\begin{center}
  \begin{tikzpicture}[scale=.4]
    \draw (-1,0) node[anchor=east]{} ;
    \foreach \x in {0,...,8}
    \draw[xshift=\x cm,thick, fill=black] (\x cm,0) circle (.25cm);
    \draw[xshift = 4.5 cm,thick, fill=black] (4.5 cm,1.6) circle (.25cm);
    \draw[xshift=16 cm,thick, fill=black] (30: 17 mm) circle (.25cm);
    \draw[xshift=16 cm,thick, fill=black] (-30: 17 mm) circle (.25cm);
    
    \foreach \y in {1.15, 3.15,4.15,5.15, 7.15}
    \draw[xshift=\y cm,thick] (\y cm,0) -- +(1.4 cm,0) ;
    \draw[xshift=0.15 cm,thick] (0.15 cm,0) -- node[above]{4} +(1.4 cm,0) ;
    \foreach \y in {2.15, 6.15}
    \draw[xshift=\y cm,dotted, thick] (\y cm,0) -- +(1.4 cm,0) ;
    \draw[xshift=8 cm,thick] (30: 0 mm) -- (60: 16 mm);
    \draw[xshift=10 cm,thick] (30: 0 mm) -- (120: 16 mm);
    \draw[xshift=16 cm,thick] (30: 3 mm) -- (30: 14 mm);
    \draw[xshift=16 cm,thick] (-30: 3 mm) -- (-30: 14 mm);
  \end{tikzpicture}
\end{center}

\begin{center}
  \begin{tikzpicture}[scale=.4]
    \draw (-1,0) node[anchor=east]{} ;
    \foreach \x in {1,...,8}
    \draw[xshift=\x cm,thick, fill=black] (\x cm,0) circle (.25cm);
    \draw[xshift = 4.5 cm,thick, fill=black] (4.5 cm,1.6) circle (.25cm);
    \draw[xshift=16 cm,thick, fill=black] (30: 17 mm) circle (.25cm);
    \draw[xshift=16 cm,thick, fill=black] (-30: 17 mm) circle (.25cm);
    \draw[xshift=2 cm,thick, fill=black] (210: 17 mm) circle (.25cm);
    \draw[xshift=2 cm,thick, fill=black] (150: 17 mm) circle (.25cm);
    
    \foreach \y in {1.15, 3.15,4.15,5.15, 7.15}
    \draw[xshift=\y cm,thick] (\y cm,0) -- +(1.4 cm,0) ;
    \foreach \y in {2.15, 6.15}
    \draw[xshift=\y cm,dotted, thick] (\y cm,0) -- +(1.4 cm,0) ;
    \draw[xshift=8 cm,thick] (30: 0 mm) -- (60: 16 mm);
    \draw[xshift=10 cm,thick] (30: 0 mm) -- (120: 16 mm);
    \draw[xshift=16 cm,thick] (30: 3 mm) -- (30: 14 mm);
    \draw[xshift=16 cm,thick] (-30: 3 mm) -- (-30: 14 mm);
    \draw[xshift=2 cm,thick] (150: 3 mm) -- (150: 14 mm);
    \draw[xshift=2 cm,thick] (210: 3 mm) -- (210: 14 mm);
  \end{tikzpicture}
\end{center}

In each case, the structure of maximal finite parabolic subgroups is the same, therefore the following results for $W$ remain the same.

\end{rem}

\subsection{Cohomology of subcomplexes}

The following result completes the computation of the cohomology of $D_\s$ for all $\s \subseteq \D$. More precisely, the simplices $\s$ appearing in the following theorem are exactly those whose corresponding parabolic subgroup $W_J$ of $W$ is finite but with $W_{J^c}$ infinite. Let $\D_i$ be the face of codimension 1 of $\D$ corresponding to the generator $s_i$ of $W$ and $D_i = D \cap \D_i$.

\begin{thm} Let $D$ be the Davis chamber of $(W,S)$ of type $\Tilde{A}_{n,2}$. \\
$1.$ For $\s = \D_0$, the space $D_\s = \bigcup_{j \neq 0} D_{j}$ has the cohomology of an $(n-2)$-dimensional sphere. \\
$2.$ For $\s = \D$, the space $D_\s = D \cap \partial \D$ has the cohomology of an $(n-2)$-dimensional sphere. \\
Let $\t \subset \D$ be the simplex such that $\D_\t = \D_0 \cup \D_1 \cup \D_2$.\\
$3.$  The space $D_\t$ has the cohomology of the circle.\\
$4.$ For $\t \subsetneq \s \subsetneq \D$, the space $D_\s$ is contractible.
\end{thm}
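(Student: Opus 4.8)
The plan is to express each $D_\s$ as a union $\bigcup_{j\in L}D_j$ of the subcomplexes $D_j=D\cap\D_j$ and to analyse such unions with the two combinatorial lemmas of Section~3.2. The basic tool is the identification underlying Remark~\ref{contractible_Xi}: for $K\subseteq S$ the intersection $\bigcap_{j\in K}D_j$ equals $D\cap\s_K$, where $\s_K\subseteq\D$ is the face with parabolic subgroup $W_K$; this is contractible if $W_K$ is finite and empty if $W_K$ is infinite. The combinatorial input for $\widetilde A_{n,2}$ is that $W_K$ is infinite exactly when $K\supseteq\{s_0,s_1,s_2\}$ or $K\supseteq\{s_1,\dots,s_n\}$, and that otherwise the subdiagram on $K$ is a disjoint union of strings, so $W_K$ is a product of finite type-$A$ groups. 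Parts~$1$ and $3$ then follow immediately from Proposition~\ref{affine_non-vanishing}: for $\s=\D_0$ one has $D_\s=\bigcup_{j\ge 1}D_j$ with $J=\{s_1,\dots,s_n\}$, and $W_J\cong\widetilde A_{n-1}$ is infinite with all proper parabolics finite, so $D_\s$ has the cohomology of an $(n-2)$-sphere (as $|J|=n$); for $\t$, whose parabolic is $W_{\{s_3,\dots,s_n\}}$, one has $D_\t=D_0\cup D_1\cup D_2$ with $J=\{s_0,s_1,s_2\}$, and $W_J\cong\widetilde A_2$ (here $(s_1s_2)^3=1$ because $n\ge 3$) is infinite with all proper parabolics finite, which gives the cohomology of a circle.

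For part~$2$ I would write $D\cap\partial\D=D_0\cup D'$ with $D'=\bigcup_{j\ge 1}D_j$, the space of part~$1$. Since $W_{\{s_0\}}$ is finite, $D_0=D\cap\D_0$ is contractible (Remark~\ref{contractible_Xi}). The key step is to show that the overlap $D_0\cap D'=\bigcup_{j\ge 1}(D_0\cap D_j)$ is contractible. Writing $E_j=D_0\cap D_j=D\cap\s_{\{s_0,s_j\}}$, each $E_j$ is contractible, $E_1\cap E_2=D\cap\s_{\{s_0,s_1,s_2\}}=\emptyset$, and every other partial intersection $\bigcap_{j\in K}E_j=D\cap\s_{\{s_0\}\cup K}$ with $\{s_1,s_2\}\not\subseteq K$ has finite parabolic, hence is contractible. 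Putting $B_i=E_i\cup\bigcup_{j\ge 3}E_j$ for $i=1,2$, Lemma~\ref{observation} gives that $B_1$, $B_2$ and $B_1\cap B_2=\bigcup_{j\ge 3}E_j$ are contractible, hence so is $B_1\cup B_2=D_0\cap D'$. The Mayer--Vietoris sequence of $D\cap\partial\D=D_0\cup D'$, with $D_0$ and $D_0\cap D'$ contractible, then yields $\widetilde H^{*}(D\cap\partial\D)\cong\widetilde H^{*}(D')$, which is the cohomology of an $(n-2)$-sphere by part~$1$.

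For part~$4$, a simplex $\s$ with $\t\subsetneq\s\subsetneq\D$ has parabolic $W_J$ with $\emptyset\neq J\subsetneq\{s_3,\dots,s_n\}$ (this range is empty when $n=3$, so there is nothing to prove in that case), hence $D_\s=D_0\cup D_1\cup D_2\cup Z$ where $Z=\bigcup_{j\in M}D_j$ and $M=\{s_3,\dots,s_n\}\setminus J$ is nonempty and a proper subset of $\{s_3,\dots,s_n\}$; the properness encodes the hypothesis $\s\neq\D$. Set $A_i=D_i\cup Z$ for $i=0,1,2$, so $D_\s=A_0\cup A_1\cup A_2$. Using Lemma~\ref{observation} repeatedly I would check: $Z$ is contractible (every $W_K$ with $K\subseteq M$ is of type $A$, hence finite); each $A_i=\bigcup_{j\in\{s_i\}\cup M}D_j$ is contractible (the subdiagram on $\{s_i\}\cup M$ is a union of strings); each $A_i\cap A_j=(D_i\cap D_j)\cup Z$ is contractible, the only thing to verify being that $W_{\{s_i,s_j\}\cup K}$ is finite for all $K\subseteq M$ --- true because $\{s_i,s_j\}\cup K$ misses an element of $J\subseteq\{s_3,\dots,s_n\}$ when $\{i,j\}=\{1,2\}$, and otherwise the subdiagram on $\{s_i,s_j\}\cup K$ is again a union of strings; and finally $A_0\cap A_1\cap A_2=Z$, because a point lying in all three $A_i$ but not in $Z$ would lie in $D_0\cap D_1\cap D_2=D\cap\s_{\{s_0,s_1,s_2\}}=\emptyset$. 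A last application of Lemma~\ref{observation} to $\{A_0,A_1,A_2\}$ then shows $D_\s$ is contractible.

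The main obstacle is the combinatorial bookkeeping in parts~$2$ and $4$: one must expand the nested unions $D_0\cap D'$, $A_i\cap A_j$, $A_0\cap A_1\cap A_2$ correctly and --- more importantly --- organise the decompositions so that the only infinite parabolic ever met is $\widetilde A_2=\langle s_0,s_1,s_2\rangle$, producing an empty intersection in exactly one controlled place, while in part~$4$ the other infinite parabolic $\widetilde A_{n-1}=\langle s_1,\dots,s_n\rangle$ is never fully present, precisely because $M\subsetneq\{s_3,\dots,s_n\}$. Once this is arranged, the proof reduces to a sequence of applications of Lemma~\ref{observation} together with a single Mayer--Vietoris argument.
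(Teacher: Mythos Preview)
Your proof is correct and, for parts 1--3, matches the paper's argument essentially verbatim. The only organisational difference is in part~4: the paper treats parts~2 and~4 uniformly by writing (after relabelling the indices $\ge 3$) $D_\s = D_0 \cup A_k$ with $A_k = D_1 \cup \dots \cup D_k$, showing once that $D_0 \cap A_k$ is contractible for every $k \ge 3$, and then invoking Mayer--Vietoris so that $H^*(D_\s) \cong H^*(A_k)$; the case $k = n$ gives part~2 and $3 \le k < n$ gives part~4 (since $A_k$ is then contractible by Proposition~\ref{contractible_union}). Your three-piece cover $A_0 \cup A_1 \cup A_2$ with Lemma~\ref{observation} is an equally valid alternative that sidesteps the relabelling, at the cost of handling parts~2 and~4 separately; both routes rest on the same combinatorial fact that the unique relevant empty intersection is $D_0 \cap D_1 \cap D_2$.
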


\begin{proof}
Assertions 1 and 3 follow from \ref{affine_non-vanishing} since $W_{\D_0}$ and $\langle s_0, s_1, s_2 \rangle$ are infinite affine Coxeter groups of type $\Tilde{A}_{n-1}$ and $\Tilde{A}_{2}$. It remains to prove assertions 2 and 4, that is, the cases $\t \subsetneq \s \subseteq \D$. Denote $A_I = D_1 \cup D_2 \cup \bigcup_{k \in I} D_k$, for the corresponding nonempty $I \subseteq \{3, \ldots, n\}$ so that $D_\s = D_0 \cup A_I$. Denote $D_{ij} = D_i \cap D_j$ and $D_{ijk} = D_i \cap D_j \cap D_k$. Our first goal is to prove that the intersection $D_0 \cap A_I$ is contractible, write it as the following union:
\begin{equation*}
    D_0 \cap A_I =  D_{0,1} \cup D_{0,2} \cup  \bigcup_{l \in I} D_{0,l}.
\end{equation*}
First the union $A_I' = \bigcup_{l \in I} D_{0,l}$ is contractible because of \ref{contractible-union} (the group $\langle s_0, s_i | i \in I \rangle$ is finite, so the same argument as in the proof of \ref{contractible-union-of-D_s} works). Again by \ref{contractible-union}, the intersections $A_I' \cap D_{0,1} =  \bigcup_{l \in I} D_{0,1,l}$ and $A_I' \cap D_{0,2}  =  \bigcup_{l \in I} D_{0,2,l}$ are contractible (the groups $\langle s_0,s_1, s_i | i \in I \rangle \subseteq W_1$ and $\langle s_0,s_2, s_i | i \in I \rangle \subseteq W_2$ are finite). Notice that $D_{0,1} \cap D_{0,2} = (\D_0 \cap \D_1 \cap \D_2) \cap D = \emptyset$. By the Nerve lemma, $D_0 \cap A_I$ is homotopy equivalent to the nerve of the cover $\mathcal{U} = \{  D_{0,1}, D_{0,2},  A_I' \}$, which is the barycentric subdivision of a simplex of dimension 1. Thus the intersection $D_0 \cap A_I$ is contractible.

Therefore Mayer-Vietoris tells us that $D_0 \cup A_I$ has the same cohomology as $A_I$. If $I = \{3, \ldots, n\}$, we have $A_I = D_{\D_0}$ so from the first assertion we know $A_I$ has the cohomology of an $(n-2)$-dimensional sphere. Thus $D_\D = D \cap \partial \D = D_0 \cup A_I$ has the cohomology of an $(n-2)$-dimensional sphere. If $I \subsetneq \{3, \ldots, n\}$, then $A_I$ is contractible because of \ref{contractible-union}, hence the union $D_\s= D_0 \cup A_I$ is contractible.

\end{proof}

We now recover the corresponding results for $L^2$-Betti numbers of groups in $\mathcal{B}t$ with Weyl group $\Tilde{A}_{n,2}$.

\begin{cor}
Let $(X, G)$ be a building in $\mathcal{B}t$ of thickness $q + 1$ and Weyl group of type $\Tilde{A}_{n,2}$ with $n \geq 3$. Normalize the Haar measure $\mu$ on $G$ so that the stabilizer $G_\D$ of an alcove $\D$ has measure 1. Then we have:

\begin{equation*}
    \b^k(G) =  \; \Bigg \{ \; \begin{matrix}
     \dim_G L^2(G)^{\D} + \dim_G L^2(G)^{\D_0}  &  k = n -1, \\
     \dim_G L^2(G)^{\t} &  k = 2, \\
     0 &  \textrm{ otherwise. } 
    
    \end{matrix}
\end{equation*}
\end{cor}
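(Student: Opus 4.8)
The plan is to plug the cohomology computations of the preceding theorem directly into the formula of Theorem \ref{formuleBetti}, together with the dimension count of Dymara--Januszkiewicz, and then read off in which degrees the sum is nonzero. So first I would recall that, by Theorem \ref{formuleBetti}, $\b^k(G) = \sum_{\s \subseteq \D} \dim_\co \Tilde{H}^{k-1}(D_\s;\co) \cdot \dim_G L^2(G)^{\s}$, and that (by the Proposition at the end of Section 3.1) only those $\s$ whose parabolic $W_J$ is finite and whose complementary parabolic $W_{J^c}$ is infinite can contribute, since all other $D_\s$ are contractible and have trivial reduced cohomology. For the diagram $\Tilde{A}_{n,2}$ these $\s$ were enumerated in the preceding theorem: up to the identifications $\D_\s \leftrightarrow J^c$, the relevant simplices are $\D_0$ (with $D_{\D_0}$ an $(n-2)$-sphere), $\D$ itself (with $D_{\partial\D}$ an $(n-2)$-sphere), the simplex $\t$ with $\D_\t = \D_0 \cup \D_1 \cup \D_2$ (with $D_\t$ a circle), and all $\t \subsetneq \s \subsetneq \D$ with $D_\s$ contractible; everything else is contractible.

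Next I would translate this into degrees. An $(n-2)$-sphere has $\Tilde{H}^{n-2}(\cdot;\co) = \co$ and all other reduced cohomology zero, so $\D_0$ and $\D$ each contribute $\dim_G L^2(G)^{\s}$ to $\b^k(G)$ exactly when $k-1 = n-2$, i.e.\ $k = n-1$. A circle has $\Tilde{H}^1(\cdot;\co) = \co$, so $\t$ contributes $\dim_G L^2(G)^{\t}$ exactly when $k-1 = 1$, i.e.\ $k = 2$. All remaining $\s$ contribute nothing in any degree. Summing, $\b^{n-1}(G) = \dim_G L^2(G)^{\D} + \dim_G L^2(G)^{\D_0}$, $\b^{2}(G) = \dim_G L^2(G)^{\t}$, and $\b^k(G) = 0$ otherwise. (One should note that for $n = 3$ the degrees $n-1 = 2$ coincide; if the statement is read for $n \geq 4$, or with the convention that the two lines are added when they overlap, there is no conflict — but I would flag this edge case, or restrict to $n \geq 4$, or observe that the term $\dim_G L^2(G)^{\t}$ adds to the others when $n=3$.)

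Finally, I would address the normalization clause: the formula of Theorem \ref{formuleBetti} implicitly fixes a Haar measure (this is what makes the real numbers $\dim_G L^2(G)^{\s}$ well-defined), and the stated normalization $\mu(G_\D) = 1$ is exactly the one under which $\dim_G L^2(G) = 1/\mathrm{covol}$ conventions of \cite{dymara} apply; so I would just remark that with this choice the quantities $\dim_G L^2(G)^{\s}$ appearing above are the ones of Theorem \ref{formuleBetti} verbatim, and no further computation of von Neumann dimensions is needed — the corollary only records in which degrees $\b^k(G)$ is supported and with what (a priori positive, for $q > 2^{|S|}$) coefficients.

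The main obstacle is essentially bookkeeping rather than mathematics: one must be careful with the index shift between $D_\s$ and the subset $J^c$ (the preceding theorem is phrased in terms of $\D_\s = \bigcup_{s \in J^c} D_s$), make sure the four cases of that theorem genuinely exhaust all $\s$ with $W_J$ finite and $W_{J^c}$ infinite, and handle the degree collision at $n = 3$. There is also a small subtlety in that the reduced cohomology of the $(n-2)$-sphere is concentrated in degree $n-2$ only when $n \geq 3$ (for $n = 2$ a $0$-sphere has reduced $H^0 = \co$), but since the hypothesis is $n \geq 3$ this is fine; I would simply state it.
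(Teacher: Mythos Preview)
Your proposal is correct and follows exactly the same approach as the paper's proof: plug the preceding theorem's cohomology computations into Theorem \ref{formuleBetti}, after restricting the sum to simplices $\s$ with $W_J$ finite and $W_{J^c}$ infinite, and read off the degrees. Your write-up is in fact considerably more detailed than the paper's (which is a two-line remark), and you correctly flag the degree collision at $n=3$ that the paper's statement leaves implicit.
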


\begin{proof} The sum in the formula \ref{formuleBetti} runs over $\s \subseteq \D$ such that its corresponding parabolic subgroup $W_J, J\subseteq S$ is finite and such that $W_{J^c}$ is not finite. Such simplices are exactly the ones treated in the theorem. Their non-vanishing cohomology groups give the result.
\end{proof}

Thus, by the same arguments as in the previous section, the Künneth formula gives the following more precise statement for Kac-Moody lattices $\L$ having Weyl group of type $\Tilde{A}_{n,2}$. Moreover, the theorems we use are quantitative: they give us an explicit formula for $\b^k(\L)$. We do not give the formulas since the only information that matters to us is in which degrees $\b^k(\L)$ vanishes and it which it does not vanish for large $q$.

\begin{cor}
Let $\L$ be a center-free Kac-Moody group over a finite field $\mathbb{F}_q$ with Weyl group of type $\Tilde{A}_{n,2}$. For $q$ large enough,
\begin{equation*}
    \b^{k}(\L) \; \bigg \{ \; \begin{matrix}
    > 0  & k \in \{ 4, n+1, 2n-2 \},  \\
    = 0 &  $\textrm{ otherwise}$ .
    \end{matrix}
\end{equation*}
Moreover, $\L$ is infinite finitely presented, Kazhdan and simple.
\end{cor}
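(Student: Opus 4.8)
The plan is to imitate the proof of Corollary \ref{KM_lattice_Betti}, but to feed into it the sharp computation of the $L^2$-Betti numbers of the complete Kac-Moody group provided by the corollary immediately above, instead of the one-sided bound used there. So let $G$ be the complete Kac-Moody group associated to $\L$; for $q$ large it lies in $\mathcal{B}t$, and the preceding corollary (combined with the fact that $\dim_G L^2(G)^\s>0$ for every $\s$ with compact links once $q>2^{|S|}$) tells us that $\b^k(G)>0$ exactly for $k\in\{2,n-1\}$, and $\b^k(G)=0$ otherwise.

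First I would isolate the cohomological-dimension input. The group $G$ acts properly and cocompactly on the Davis complex $X_D$ of the building coming from its $BN$-pair, and $\dim X_D\le|S|-1=n$; so \cite[3.4]{dymara} gives $H^k_{ct}(G,\rho)=0$ for $k\ge|S|=n+1$ and every quasi-complete representation $\rho$. This is precisely what is needed (finite-dimensionality, hence finiteness of the $\b^k(G)$, hence validity of the Künneth formula) in order to apply the Künneth formula for $L^2$-Betti numbers \cite[6.5]{petersen} to $G\times G$, which yields $\b^k(G\times G)=\sum_{i+j=k}\b^i(G)\b^j(G)$. Since the only nonzero $\b^i(G)$ sit in degrees $i=2$ and $i=n-1$, the sum is nonzero exactly when $k\in\{2+2,\,2+(n-1),\,(n-1)+(n-1)\}=\{4,\,n+1,\,2n-2\}$ and vanishes otherwise (for $n=3$ this set degenerates to $\{4\}$, consistent with $\b^\bullet(G)$ being supported in degree $2$ alone).

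Next, for $q>n+1$ Theorem \ref{Kac-Moody}(1) makes $\L$ a lattice in $G\times G$, so by \cite[5.9]{petersen} the sequences $(\b^k(\L))_k$ and $(\b^k(G\times G))_k$ are proportional with a strictly positive constant; in particular they have the same set of vanishing degrees, which gives the displayed statement. For the remaining assertions I would argue exactly as in Corollary \ref{KM_lattice_Betti}: the Coxeter matrix of $\Tilde{A}_{n,2}$ has all entries in $\{2,3\}$, hence finite, so Theorem \ref{Kac-Moody}(3) gives finite presentation for $q\ge4$, and Theorem \ref{cohomG=0} gives property $(T)$ for $G$, hence for $G\times G$, hence for the lattice $\L$; the diagram $\Tilde{A}_{n,2}$ is connected and is not an affine diagram, so $(W,S)$ is irreducible and non-affine, and Theorem \ref{Kac-Moody}(2) together with the standing hypothesis $Z(\L)=\{0\}$ yields simplicity; and $\L$ is infinite, being a lattice of positive covolume in the non-compact group $G\times G$. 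The only genuine point requiring care is the applicability of the Künneth formula, which is why the finite-dimensionality/vanishing step is singled out first; everything after that is bookkeeping with the exponent set $\{2,n-1\}$ and invocations of the quoted theorems.
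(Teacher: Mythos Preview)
Your proof is correct and follows exactly the approach the paper indicates: the paper does not write out a separate proof for this corollary but says it follows ``by the same arguments as in the previous section'' (i.e.\ as in Corollary~\ref{KM_lattice_Betti}) together with the K\"unneth formula applied to the sharp computation of $\b^k(G)$ from the preceding corollary. Your write-up is in fact more detailed than the paper's, correctly identifying $|S|=n+1$, the support set $\{2,n-1\}$ for $\b^\bullet(G)$, and the resulting support $\{4,n+1,2n-2\}$ for $\b^\bullet(\Lambda)$, including the degenerate case $n=3$.
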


\section{Sphericity and cohomological finiteness}

In this section we discuss connections between $n$-sphericity and finiteness properties $F_n$ and $FP_n$ of a Kac-Moody group (over a finite field). Theorem \ref{Kac-Moody} 3, says that for large $q$, the 2-sphericity condition implies property $F_2$, that is, finite presentation.
The converse is still a conjecture, but it has at least been proven in particular cases \cite{abramenko-gates}. Much less is known for higher finiteness properties. Abramenko obtained some partial results in this direction \cite{abramenkofiniteness}.

Discrete Kac-Moody groups with Weyl group of type $\widetilde{A}_{n,2}$ are finitely presented (at least for $q > 6$) since they are $2$-spherical, but they are not $3$-spherical because of the subdiagram of type $\Tilde{A}_2$ that we introduced. We can ask if it is possible to obtain stronger finiteness properties for non-affine Kac-Moody groups. Here we present a family of non-affine Coxeter diagrams that are $8$-spherical but not $9$-spherical. We call $\widetilde{B}_{n,8}$ the Coxeter diagram with $n+1$ generators as below:

\begin{center}
  \begin{tikzpicture}[scale=.4]
    \draw (-1,1) node[anchor=east]  {$\widetilde{B}_{n,8}$};
    \foreach \x in {0,...,8}
    \draw[thick,xshift=\x cm,fill=black] (\x cm,0) circle (2.5 mm);
    \draw[thick, fill=black] (4 cm,2 cm) circle (2.5 mm);
    
    \foreach \y in {0,...,4}
    \draw[thick,xshift=\y cm] (\y cm,0) ++(.3 cm, 0) -- +(14 mm,0);
    \draw[thick] (4 cm, 3mm) -- +(0, 1.4 cm);
    \draw[xshift=5.2 cm,dotted, thick] (5.2 cm,0) -- +(1.4 cm,0) ;
    \draw[thick,xshift=6 cm] (6 cm,0) ++(.3 cm, 0) -- +(14 mm,0);
    \draw[thick,xshift=7 cm] (7 cm,0) ++(.3 cm, 0) -- node[above]{4} +(14 mm,0);
  \end{tikzpicture}
\end{center}

The Davis chamber of a Coxeter group of type $\widetilde{B}_{n,8}$ is similar to that of $\widetilde{A}_{n,2}$. It contains a subdiagram of type $\Tilde{E}_8$ when $n \geq 9$. If we remove the generator $s_0$ at the left, we obtain an affine subdigram of type $\Tilde{B}_{n-1}$ with generators $s_1, \ldots s_n$. If we take out any other generator $s_i$ ($i = 1, \ldots, 8$) of $\Tilde{E}_8$, we obtain a maximal finite parabolic subgroup. Hence the Davis chamber consists of an affine part ($n$ cubes of dimension $n-1$) and $8$ cubes of dimension $n$. The result for the diagram $\Tilde{A}_{n,2}$ remains valid for $\widetilde{B}_{n,8}$.

\begin{thm} Let $D$ be the Davis chamber of $(W,S)$ of type $\Tilde{B}_{n,8}$. \\
$1.$ For $\s = \D_0$, $D_\s = \bigcup_{j \neq 0} D_{s_j}$ has the cohomology of an $(n-2)$-dimensional sphere. \\
$2.$ For $\s = \D$, $D_\s = D \cap \partial \D$ has the cohomology of an $(n-2)$-dimensional sphere. \\
$3.$ Let $\t \subset \D$ be the simplex such that $\D_\t = \bigcup_{i = 0}^8 \D_i$. The space $D_\t$ has the cohomology of a $7$-dimensional sphere.\\
$4.$ For $\t \subsetneq \s \subsetneq \D$, $D_\s$ is contractible.
\end{thm}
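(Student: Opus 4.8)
The plan is to run, for $\widetilde B_{n,8}$, exactly the argument that worked for $\widetilde A_{n,2}$, with the affine subgroup of type $\widetilde A_2$ on $\{s_0,s_1,s_2\}$ replaced by the affine subgroup of type $\widetilde E_8$ on $\{s_0,s_1,\dots,s_8\}$, and $\widetilde A_{n-1}$ replaced by $\widetilde B_{n-1}$. Everything hinges on an elementary inspection of the diagram: $\widetilde B_{n,8}$ is a tree with no label $\infty$, its only node of degree $3$ is the branch node and its only edge of label $>3$ lies at the end of the tail, so its only affine subdiagrams are $\{s_0,\dots,s_8\}$ (type $\widetilde E_8$) and $\{s_1,\dots,s_n\}$ (type $\widetilde B_{n-1}$), and it contains no compact hyperbolic subdiagram. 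Hence, for $J\subseteq S$, the parabolic $W_J$ is infinite if and only if $J\supseteq\{s_0,\dots,s_8\}$ or $J\supseteq\{s_1,\dots,s_n\}$; equivalently, by Remark \ref{contractible_Xi}, $D\cap\bigcap_{s\in L}\D_s$ is non-empty — and then contractible — exactly when $L\subseteq S$ omits some element of $\{s_0,\dots,s_8\}$ and some element of $\{s_1,\dots,s_n\}$. I will use this finiteness criterion throughout without further comment.

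Assertions $1$ and $3$ then follow at once from Proposition \ref{affine_non-vanishing}: for $1$ apply it with $J=\{s_1,\dots,s_n\}$, so $|J|=n$ and $W_J=\widetilde B_{n-1}$ is infinite with every proper parabolic finite; for $3$ apply it with $J=\{s_0,\dots,s_8\}$, so $|J|=9$ and $W_J=\widetilde E_8$ is infinite with every proper parabolic finite. For assertions $2$ and $4$ I would write $D_\s=D_0\cup A$, where $D_0=D\cap\D_0$ and $A=\bigcup_{s\in T}D_s$ is the union of the remaining faces: in case $2$, $T=\{s_1,\dots,s_n\}$ so $A=D_{\D_0}$, which has the cohomology of $S^{n-2}$ by assertion $1$; in case $4$, one finds $T=\{s_1,\dots,s_8\}\cup M$ with $\emptyset\neq M\subsetneq\{s_9,\dots,s_n\}$, so $T$ omits $s_0$ and omits some element of $\{s_1,\dots,s_n\}$, whence $W_T$ is finite and $A$ is contractible by Proposition \ref{contractible_union}. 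In both cases $D_0$ is contractible by Remark \ref{contractible_Xi}; so once $D_0\cap A$ is shown contractible, Mayer--Vietoris finishes case $2$ (giving $\widetilde H^*(D_\s)\cong\widetilde H^*(D_{\D_0})\cong\widetilde H^*(S^{n-2})$) and Lemma \ref{observation} finishes case $4$.

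The crux — and the one point where the rank-$9$ obstruction $\widetilde E_8$, rather than the rank-$3$ obstruction $\widetilde A_2$, forces extra work — is the contractibility of $D_0\cap A=\bigcup_{s\in T}(D_0\cap D_s)$. The $8$-fold intersection $\bigcap_{i=1}^{8}(D_0\cap D_{s_i})$ equals $D\cap\bigcap_{i=0}^{8}\D_{s_i}$, which is empty because $W_{\{s_0,\dots,s_8\}}=\widetilde E_8$ is infinite, so Lemma \ref{observation} does not apply to the cover $\{D_0\cap D_s\}_{s\in T}$ directly. Instead I would split $T=\{s_1,\dots,s_8\}\sqcup T'$ with $T'\subseteq\{s_9,\dots,s_n\}$, set $C=\bigcup_{s\in T'}(D_0\cap D_s)$, and cover $D_0\cap A$ by the eight subcomplexes $E_i=(D_0\cap D_{s_i})\cup C$, $i=1,\dots,8$. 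Every parabolic appearing in this analysis — those defining $C$, $D_0\cap D_{s_i}$, $(D_0\cap D_{s_i})\cap C$, the intersections $\bigcap_{i\in I}(D_0\cap D_{s_i})$ for $I\subsetneq\{1,\dots,8\}$, and all their sub-intersections — is of the form $W_{\{s_0\}\cup\{s_i:i\in I\}\cup L}$ with $I\subsetneq\{1,\dots,8\}$ and $L\subseteq\{s_9,\dots,s_n\}$, hence omits some element of $\{s_1,\dots,s_8\}$ and some element of $\{s_1,\dots,s_n\}$, so is finite; by Lemma \ref{observation} this makes $C$, each $D_0\cap D_{s_i}$, each $(D_0\cap D_{s_i})\cap C$ and each $\bigcap_{i\in I}(D_0\cap D_{s_i})$ ($I\subsetneq\{1,\dots,8\}$) non-empty and contractible. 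Therefore each $E_i$ is contractible; and distributivity gives $\bigcap_{i\in I}E_i=\big(\bigcap_{i\in I}(D_0\cap D_{s_i})\big)\cup C$, which equals $C$ when $I=\{1,\dots,8\}$ (the first term being empty) and, when $I\subsetneq\{1,\dots,8\}$, is the union of two contractibles whose intersection is again contractible by the same check — contractible in every case. Lemma \ref{observation} applied to $\{E_i\}_{i=1}^{8}$ now gives that $D_0\cap A$ is contractible, which finishes the proof. I expect the only real obstacle to be keeping this bookkeeping organized; beyond Lemmas \ref{observation} and \ref{sphere_union}, Remark \ref{contractible_Xi}, Propositions \ref{contractible_union} and \ref{affine_non-vanishing}, and the Mayer--Vietoris sequence, no new input is needed.
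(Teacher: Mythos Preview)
Your argument is correct and follows the paper's approach: assertions 1 and 3 via Proposition~\ref{affine_non-vanishing}, assertions 2 and 4 via the splitting $D_\s=D_0\cup A$ together with Mayer--Vietoris once $D_0\cap A$ is shown contractible. The paper's own proof is terse---it merely says one must ``iterate the contractibility argument 8 times on $D_0\cap A_k$''---and your cover by the $E_i=(D_0\cap D_{s_i})\cup C$ is a clean, explicit implementation of exactly that step.
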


\begin{proof}
The proof is almost the same as for $\Tilde{A}_{n,2}$: assertions 1. and 3. follow from \ref{affine_non-vanishing} while 2. and 4. need some adjustment. We keep the same notations as in the other proof. The only difference is that in order to show $D_0 \cup A_I$ has the same cohomology as $A_I$ for some non-empty $I \subseteq \{9, \ldots, n\}$, one has to show that the nerve of some cover of $D_0 \cap A_I$ by 9 elements is the boundary of a simplex of dimension 8 minus one face of codimension 1, showing that $D_0 \cap A_I$ is contractible.  
\end{proof}

Thus we obtain the same corollaries for Kac-Moody groups as in the previous section. The complete Kac-Moody groups with Weyl group of type $\Tilde{B}_{n,8}$ have non-vanishing $L^2$-Betti numbers (for large thickness) exactly in degrees $n-1$ and $8$. At the level of discrete Kac-Moody groups, we obtain an infinite family of infinite finitely presented, Kazhdan, $8$-spherical, simple groups that are pairwise non-measure equivalent. By the Künneth formula, their $\ell^2$-Betti numbers vanish except in degrees $16, n+7$ and $2n-2$.

One cannot have better sphericity properties for a non-affine irreducible Coxeter diagram. The following proposition has to be stated somewhere. The author did not find a reference, so we give the proof here.

\begin{prop}
A $9$-spherical irreducible Coxeter group is either finite or affine.
\end{prop}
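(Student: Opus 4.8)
We prove the contrapositive in the sharper form: if $(W,S)$ is irreducible (so its Coxeter diagram $\Gamma$ is connected) with $S$ finite, is infinite, and is not affine, then it is not $9$-spherical. Concretely, the plan is to extract from $\Gamma$ a connected subdiagram on at most $9$ nodes that is of affine or compact hyperbolic type; since a Coxeter group with such a diagram is infinite, this witnesses a failure of $9$-sphericity. Throughout, ``subdiagram'' means full (induced) subdiagram, and I use freely the two facts recalled in the Remark following Proposition \ref{affine_non-vanishing}: a connected diagram all of whose proper connected subdiagrams are spherical is affine or compact hyperbolic, and compact hyperbolic diagrams have at most $5$ nodes.

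\textbf{Step 1: locating the obstruction.} Since $W$ is infinite, $\Gamma$ is not of spherical type, so among the connected non-spherical subdiagrams of $\Gamma$ there is one with a minimal number of nodes, say $\Gamma_0$. By minimality every proper connected subdiagram of $\Gamma_0$ is spherical, hence $\Gamma_0$ is affine or compact hyperbolic. If $\Gamma_0$ had at most $9$ nodes it would already contradict $9$-sphericity (compact hyperbolic and affine diagrams correspond to infinite groups), so $\Gamma_0$ has at least $10$ nodes; in that range the compact hyperbolic case is excluded and the affine case is one of the classical families $\widetilde A_n,\widetilde B_n,\widetilde C_n,\widetilde D_n$ with $n\ge 9$ (the exceptional affine diagrams $\widetilde E_{6},\widetilde E_7,\widetilde E_8,\widetilde F_4,\widetilde G_2$ all have at most $9$ nodes). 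If $\Gamma=\Gamma_0$ then $\Gamma$ is affine, contrary to hypothesis; hence connectedness of $\Gamma$ gives a node $s\notin\Gamma_0$ joined to some $v\in\Gamma_0$.

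\textbf{Step 2: the key claim.} It remains to prove: \emph{if $\Delta$ is an affine diagram of type $\widetilde A_n,\widetilde B_n,\widetilde C_n$ or $\widetilde D_n$ with $n\ge 9$ and one adjoins a node $s$ joined to $\Delta$ by at least one bond, the resulting diagram contains a connected non-spherical subdiagram on at most $9$ nodes.} Applying this to $\Delta=\Gamma_0$ and the subdiagram carried by $\Gamma_0\cup\{s\}$ inside $\Gamma$ contradicts $9$-sphericity, so $\Gamma=\Gamma_0$ is affine, proving the proposition. To prove the claim I would run a short case analysis. A classical affine diagram of rank $\ge 10$ contains a long simple path of degree-$2$ nodes; attaching $s$ by a simple bond to such a node (or near one) creates a degree-$3$ node whose three arms have lengths $1$ and, using the long spine, at least $2$ and $5$, and the corresponding $T$-shaped subdiagram is $\widetilde E_8$ on $9$ nodes (or one similarly finds $\widetilde E_7$ on $8$ nodes, a small $\widetilde D_k$, or an indefinite $T$-shape) — none spherical. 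If instead the bond from $s$ carries a label $\ge 4$, or $s$ is attached to one of the boundedly many ``special'' nodes of $\Delta$ (a fork node of $\widetilde B_n,\widetilde D_n$, or an endpoint of a label-$4$ bond of $\widetilde B_n,\widetilde C_n$), one produces by inspection a small non-spherical diagram such as $\widetilde G_2$, $\widetilde B_3$, $\widetilde C_2$, $\widetilde D_4$, $\widetilde F_4$, a short cycle $\widetilde A_k$, or a rank-$\le 5$ compact hyperbolic path carrying a label $\ge 5$. If $s$ sends several bonds into $\Delta$ one works in a neighbourhood of a single bond $s\!-\!v$, choosing the remaining nodes to avoid the other neighbours of $s$; if that is impossible because $s$ is adjacent to several nearby nodes, a triangle or short cycle through $s$ already does the job.

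\textbf{Main obstacle.} The real work is making this case analysis genuinely uniform and exhaustive: one must check every attachment position in each of the four families together with every bond label. The delicate cases are those in which $s$ is joined by a simple bond near an ``end'' of $\widetilde B_n,\widetilde C_n$ or $\widetilde D_n$, since locally this resembles an innocuous extension of a path and one is forced to walk back along the entire spine to find the small non-spherical subdiagram — this is exactly why rank $\ge 10$, equivalently $\ge 10$ nodes, is the correct threshold, as it is precisely what guarantees enough room to realise $\widetilde E_8$ (nine nodes). Everything else reduces to bookkeeping against the finite lists of spherical, affine and compact hyperbolic diagrams.
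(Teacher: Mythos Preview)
Your approach is correct and genuinely different from the paper's. The paper argues directly: assuming $9$-sphericity and irreducibility, it analyses the valencies of the Coxeter graph and the edge labels, and shows step by step that the only surviving shapes are the finite or affine diagrams. You instead isolate a minimal connected non-spherical subdiagram $\Gamma_0$, use the Bourbaki dichotomy to see that $\Gamma_0$ is affine or compact hyperbolic, dispose of the case $|\Gamma_0|\le 9$ immediately, and then argue that any extension of a large classical affine diagram by a single adjacent node already contains a non-spherical subdiagram on at most $9$ nodes. Your route makes transparent \emph{why} the threshold is $9$ (it is exactly $|\widetilde E_8|$), and it packages the classification input cleanly into one lemma; the paper's route avoids naming $\Gamma_0$ at all and instead walks through the global shape of $\Gamma$, which makes the write-up shorter but slightly hides the role of $\widetilde E_8$.

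One comment on your Step~2: the case analysis you sketch is correct but not quite as short as you suggest. The situation where $s$ is attached by a simple bond to an interior spine node of $\widetilde D_n$ at small distance from a fork (e.g.\ $n=9$, attachment two or three steps in) does not produce $\widetilde E_8$ directly; you have to fall back on $\widetilde D_5$ or $\widetilde E_7$ using one of the fork tips as part of an arm. This works, but it means the ``delicate cases near an end'' are a genuine handful of subcases rather than a single observation, so in a finished proof you would want to list them explicitly. Once that is done, your argument and the paper's are of comparable length; neither avoids an exhaustive check against the finite and affine tables.

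A minor wording issue: after stating the key claim you write ``contradicts $9$-sphericity, so $\Gamma=\Gamma_0$ is affine''. Since you are proving the contrapositive, the conclusion at that point is simply ``$W$ is not $9$-spherical''; the phrase ``so $\Gamma=\Gamma_0$'' belongs to the direct statement, not the contrapositive you set up.
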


\begin{proof}
The proof consists of ruling out all possibilities by looking at the classification of finite and affine Coxeter groups. More precisely, we look at two families of integers: the valencies of vertices of the Coxeter diagram as a graph and the labelling of edges of the diagram.

Let $(W,S)$ be a $9$-spherical irreducible Coxeter system such that $|S| \geq 9$. Its Coxeter diagram is connected by irreducibility.
If the valency of every vertex of the diagram is $\leq 2$, then the Coxeter diagram of $W$ without labelling is of type $A_n$ or $\Tilde{A}_n$ with $n \geq 9$. If we label edges of a diagram of type $A_n$ by numbers $\geq 4$, the $5$-sphericity of $W$ rules out all possibilities, except having extremal edges labelled by $4$: the possible diagrams for $W$ are $A_n$, $B_n = C_n$ or $\Tilde{C}_n$. If we label an edge of a diagram of type $\Tilde{A}_n$ by a number $\geq 4$, the associated group is not $5$-spherical, hence the only possibility for $W$ in this case is to have a diagram of type $\Tilde{A}_n$.

If not, there exists a vertex $y$ of valency $\geq 3$. The valency of $y$ cannot be $\geq 4$ since this would give directly an infinite subgroup of rank 5. Frow now on $y$ has valency 3. Call $y_1, y_2$ and $y_3$ the three neighbors of $y$. None of these vertices can have valency $\geq 3$ since this would give rise to an infinite subgroup of rank 6. The three neighbors cannot have valency $\geq 2$ simultaneously since the Coxeter graph would contain a subgraph of type $\Tilde{E}_6$ that corresponds to an infinite subgroup of rank 7. Thus we can suppose $y_3$ has valency 1. 

If both $y_1$ and $y_2$ have valency 2, then there is a subdiagram of type $\Tilde{E}_7$ or of type $\Tilde{E}_8$, which respectively correspond to infinite subgroups of rank 8 and 9. Again, $9$-sphericity rules out these possibilities.

We may assume $y_2$ and $y_3$ have valency 1, and $y_1$ has valency $\geq 2$. This implies $W$ is of type $\Tilde{B}_n$ or $\Tilde{D}_n$. Indeed, the graph of $W$ has to contain $\Tilde{B}_n$ or $\Tilde{D}_n$ as a subgraph, with possible vertices of higher valency. If such a vertex exists, then again there is a subdiagram of type $\Tilde{E}_7$ or of type $\Tilde{E}_8$.
\end{proof}

\begin{rem}
In view of this proposition, Theorem \ref{cohomG=0} does not tell us anything about cohomological vanishing in degrees $\geq 8$ for groups with BN-pair acting on a building of finite thickness with non-affine Weyl group. When the Weyl group is affine, the same vanishing result was obtained by Garland in \cite{garland}.
\end{rem}

\begin{rem}
In \cite{SWZ}, it is used that properties $F_n$ are invariant by quasi-isometries \cite{alonso}. They construct finitely presented simple groups that are $F_{n-1}$ but not $F_n$ for each $n$. This gives an infinite family of infinite finitely presented simple groups that are pairwise not quasi-isometric. 

If one could prove that property $F_n$ implies $n$-sphericity for a Kac-Moody group, then the previous proposition shows that every non-affine Kac-Moody group over a finite field is at most $F_8$. Thus the method of \cite{SWZ} could not work for non-affine Kac-Moody groups if the previous conjecture is true.
\end{rem}

\begin{comment}

\end{comment}

\bibliographystyle{amsalpha}
\bibliography{refs.bib}

\noindent Antonio López Neumann \\
 CMLS, CNRS, École polytechnique, Institut polytechnique de Paris, \\ 91128 Palaiseau Cedex, (France) \\
 antonio.lopez-neumann@polytechnique.edu \\
 
 \noindent This research is part of the author's PhD thesis at École polytechnique under the supervision of Marc Bourdon and Bertrand Rémy.

\end{document}